\documentclass[11pt]{article}
\usepackage[margin=1in]{geometry}
\usepackage{amsmath,amssymb,amsthm,mathtools}
\usepackage{graphicx,booktabs,enumitem}
\usepackage[hidelinks]{hyperref}
\usepackage[numbers,sort&compress]{natbib}
\usepackage{authblk}
\newcommand{\InstMinRe}{0.129}
\newcommand{\IrArgminLam}{0.042}
\newcommand{\IrArgminQ}{1.076}
\newcommand{\IrBoxes}{2676}
\newcommand{\IrBoxesOne}{2293}
\newcommand{\IrBoxesTwo}{383}
\newcommand{\IrCoverOne}{99.40}
\newcommand{\IrMinLink}{0.057}
\newcommand{\IrMinRe}{6.7\cdot 10^{-4}}
\newcommand{\IrRollLinks}{2293}
\newcommand{\IrThreshR}{0.575}
\newcommand{\IrUncovered}{290}

\newcommand{\LwNpos}{7}
\newcommand{\OnsBoxes}{3520}
\newcommand{\OnsEpsMax}{0.55}
\newcommand{\OnsGluings}{6916}
\newcommand{\OnsLinkMargin}{0.031}
\newcommand{\OnsMaxRzero}{0.088}
\newcommand{\OnsMaxXnorm}{11}
\newcommand{\OnsMaxYzero}{0.064}
\newcommand{\OnsMaxZ}{0.038}
\newcommand{\OnsMinGap}{0.12}
\newcommand{\OnsMinGlue}{0.078}
\newcommand{\OnsMinNontriv}{3.8}

\newcommand{\PtsMaxRzero}{2.4\cdot 10^{-15}}

\newcommand{\PtsMinLink}{0.074}
\newcommand{\RegBoxes}{771}
\newcommand{\RegGluings}{1457}
\newcommand{\RegMaxRzero}{0.044}
\newcommand{\RegMaxYzero}{0.027}
\newcommand{\RegMaxZ}{0.53}
\newcommand{\RegMinGap}{0.029}
\newcommand{\RegMinGlue}{0.029}
\newcommand{\RegMinNontriv}{1.7}
\newcommand{\RegMinRuniq}{0.069}
\newcommand{\RegSplit}{7}
\newcommand{\IrFracPD}{67}

\newtheorem{theorem}{Theorem}[section]
\newtheorem{lemma}[theorem]{Lemma}
\newtheorem{proposition}[theorem]{Proposition}

\newtheorem{maintheorem}{Theorem}

\theoremstyle{definition}
\newtheorem{definition}[theorem]{Definition}
\newtheorem{remark}[theorem]{Remark}

\newcommand{\R}{\mathbb{R}}
\newcommand{\C}{\mathbb{C}}
\newcommand{\Z}{\mathbb{Z}}
\newcommand{\N}{\mathbb{N}}
\newcommand{\nrm}[1]{\lVert #1\rVert}
\newcommand{\abs}[1]{\lvert #1\rvert}
\newcommand{\lnu}{\ell^1_\nu}
\renewcommand{\Re}{\operatorname{Re}}
\renewcommand{\Im}{\operatorname{Im}}

\title{Existence and spectral instability of steady rolls in the Nikolaevskiy equation: a computer-assisted study}
\author[1]{Nikita V. Konev\thanks{Corresponding author.}}
\author[2]{Fedor A. Vyazanitsyn}
\author[1]{Igor A. Knyazev}
\affil[1]{Faculty of Physics, ITMO~University, Saint Petersburg 197101, Russia}
\affil[2]{Computer Engineering Laboratory, Faculty of Software Engineering and Computer Systems,
ITMO~University, Saint Petersburg 197101, Russia}
\affil[ ]{\texttt{nvkonev@itmo.ru}, \texttt{faviazanitcyn@itmo.ru}, \texttt{iakniazev@itmo.ru}}
\date{}

\begin{document}
\maketitle

\begin{abstract}
The Nikolaevskiy equation $u_t+uu_x=-\partial_x^2\bigl(ru-(1+\partial_x^2)^2u\bigr)$ is a paradigm for
short-wave pattern formation coupled to a neutral long-wave (Galilean) mode, and it is believed that all
of its spatially periodic steady states are unstable. Existing results are formal (amplitude equations),
numerical, or restricted to a small neighbourhood of onset. We give computer-assisted proofs of the
following. (A) For every wavenumber $q\in[0.9,1.1]$ and every control parameter $r$ with
$(1-q^2)^2<r\le 4$, the equation has a nontrivial, real-analytic, odd steady roll of period $2\pi/q$; these
rolls form a single continuous two-parameter family which bifurcates from the trivial state at the onset
curve, with $\nrm{u}_\infty\le 2\cdot\OnsMaxXnorm\sqrt{r-(1-q^2)^2}$ near onset. (B) Every roll of this family with
$r\in[0.25,4]$ is spectrally unstable in $L^2(\R)$: its linearisation has a Bloch eigenvalue $\Lambda$ with
$\Re\Lambda\ge \IrMinRe$, and $\Re\Lambda\ge0.13$ for $r\ge \IrThreshR$. (C) At nine representative
parameter values we enclose, at the Bloch number $\sigma=q/100$, the eigenvalue numerically
identified with the long-wave (Matthews--Cox) branch; its real part is positive at seven points and negative
at two, while a Bloch eigenvalue with $\Re\Lambda\ge\InstMinRe$ exists at all nine. The proofs combine a parametrised
Newton--Kantorovich theorem, a desingularisation near onset, Taylor-model bounds on parameter boxes, and a
Weyl-sequence argument for the Bloch spectrum; all estimates are evaluated in rigorous interval arithmetic.
\end{abstract}

{\sloppy\noindent\textbf{Keywords:} Nikolaevskiy equation; pattern formation; computer-assisted proof;
Newton--Kantorovich; validated continuation; spectral instability; Bloch waves.\par}

\noindent\textbf{MSC 2020:} 35B10, 35B32, 35B35, 35Q86, 65G20, 65G40.

\section{Introduction}

The Nikolaevskiy equation
\begin{equation}\label{eq:nik}
  u_t + u u_x = -\partial_x^2\bigl(r u - (1+\partial_x^2)^2 u\bigr), \qquad x\in\R,\ t\ge 0,
\end{equation}
was proposed by Beresnev and Nikolaevskiy as a model for seismic waves in viscoelastic media
\cite{BeresnevNikolaevskiy1993}. Its linear growth rate $\sigma(k)=k^2\bigl(r-(1-k^2)^2\bigr)$ combines a
Turing-type band of unstable wavenumbers around $k=1$ for $r>0$ with a neutral mode at $k=0$ inherited from
Galilean invariance. Tribelsky and Velarde \cite{TribelskyVelarde1996} and Tribelsky and Tsuboi
\cite{TribelskyTsuboi1996} argued that this coupling destabilises all spatially periodic steady states and
leads to ``soft-mode turbulence'' directly from onset. Matthews and Cox \cite{MatthewsCox2000} derived the
asymptotically consistent amplitude equations, which predict that all rolls are unstable near onset, and
Cox and Matthews \cite{CoxMatthews2007} confirmed this picture numerically and studied a damped variant.
Zimmermann \cite{Zimmermann2017} proved that the Matthews--Cox system correctly approximates the dynamics for
$0<r\ll1$ on long time scales, and Sukhtayev \cite{Sukhtayev2016} analysed diffusive stability of
small-amplitude patterns with a conservation law. Kudryashov \cite{Kudryashov2003} constructed closed-form periodic
travelling waves at the special value $r=81/121$; see Remark~\ref{rem:exact}.

This makes \eqref{eq:nik} qualitatively different from the Swift--Hohenberg equation. There, rolls
bifurcating at a Turing instability are stable inside the Eckhaus region near onset \cite{Mielke1997}. In
\eqref{eq:nik} the neutral mode at $k=0$ couples to the pattern and removes this stable band near onset, in
the Matthews--Cox regime. Whether nonlinear saturation at finite amplitude, $r=O(1)$, can restore a band of
stable rolls has remained open. Theorem~\ref{thm:B} below answers it negatively on the rectangle
$[0.9,1.1]\times[0.25,4]$: no roll of the family is spectrally stable there. Theorem~\ref{thm:C} concerns the
mechanism: at two of nine sample points the eigenvalue numerically identified with the long-wave branch has
negative real part at $\sigma=q/100$, whereas eigenvalues at larger Bloch numbers are certified unstable.

We are not aware of rigorous results on finite-amplitude rolls of \eqref{eq:nik}, neither on their
existence away from onset nor on their stability. In this paper we provide such results by computer-assisted
proof. The methodology builds on rigorous numerics for dissipative PDEs, in particular the
Kuramoto--Sivashinsky equation \cite{ZgliczynskiMischaikow2001,ArioliKoch2010,KubicaZgliczynskiKalita2025},
validated continuation \cite{DayLessardMischaikow2007,GameiroLessardMischaikow2008}, and recent
computer-assisted analyses of patterns and their spectra
\cite{CadiotLessardNave2025,vanderAalstEtAl2025,BlancoCadiot2026,BredenCadiotZurek2026,BlancoLessard2026,AyalaGarciaAzpeitiaLessard2025}.

\paragraph{Main results.} Write $r_c(q)=(1-q^2)^2$ for the onset curve of rolls with wavenumber $q$, and let
$\nu=1.05$ and $\lnu$ be the weighted sequence space of Section~\ref{sec:setting}.

\begin{maintheorem}[Existence down to onset]\label{thm:A}
Let $\mathcal D=\{(q,r): q\in[0.9,1.1],\ r_c(q)<r\le 4\}$. There is a continuous map
$\mathcal D\ni(q,r)\mapsto a^*(q,r)\in\lnu$ such that, for every $(q,r)\in\mathcal D$,
\[
  u_{q,r}(x)=\sum_{k\ge1}2a^*_k(q,r)\sin(kqx)
\]
is a real-analytic, odd, $2\pi/q$-periodic steady solution of \eqref{eq:nik} with $a^*_1(q,r)>0$, so its
minimal period is $2\pi/q$. Each $a^*(q,r)$ is locally unique in $\lnu$, that is, among odd
solutions: it is the only zero of $F(\cdot\,;q,r)$ in the closed ball of radius $\rho(q,r)$ about it, where
$\rho\ge\RegMinGap$ for $r\ge0.25$ and $\rho\ge\OnsMinGap\,(r-r_c(q))$ for $r-r_c(q)\le\OnsEpsMax^2$. Moreover
$\nrm{a^*(q,r)}_\nu\le \OnsMaxXnorm\sqrt{r-r_c(q)}$ whenever $r-r_c(q)\le \OnsEpsMax^2$. In particular the
family bifurcates from $u\equiv0$ exactly at $r=r_c(q)$. The half-period shift $x\mapsto x+\pi/q$
preserves odd solutions and maps $a_k\mapsto(-1)^ka_k$, so $u_{q,r}(\cdot+\pi/q)$, with first coefficient
$-a^*_1<0$, is a second family of odd rolls, exchanged with the first by this symmetry.
\end{maintheorem}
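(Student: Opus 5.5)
We will prove Theorem~\ref{thm:A} by splitting $\mathcal D$ into a \emph{regular region} $r\ge 0.25$ (or more precisely $r-r_c(q)\ge \varepsilon_0^2$ for a suitable $\varepsilon_0\le\OnsEpsMax$) and an \emph{onset region} $0<r-r_c(q)\le\OnsEpsMax^2$. The two regions overlap, and the families obtained on each are glued by local uniqueness.

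\textbf{Steady-state equation.} First we write the steady problem for odd $2\pi/q$-periodic $u=\sum 2a_k\sin(kqx)$. Integrating once, $uu_x=\tfrac12(u^2)_x$, and dividing by $(kq)^2$ at each Fourier mode, we obtain the zero-finding problem
\[
F_k(a;q,r)=\lambda_k a_k+\tfrac{1}{2kq}\,(a*a)_k=0,\qquad \lambda_k=r-(1-k^2q^2)^2,\quad k\ge1,
\]
with a sign convention fixed by the paper's Section~\ref{sec:setting}. The map $F$ acts on $\lnu$, where the convolution is the sine-series product, and the $k=0$ mode is absent by oddness.

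\textbf{Regular region.} We cover $\{r\ge0.25\}$ (down to the overlap with the onset region) by parameter boxes. On each box we compute a numerical Galerkin zero $\bar a(q,r)$ as a Taylor model in $(q,r)$ and an approximate inverse $A$: a finite matrix block plus the diagonal $\lambda_k^{-1}$ for large $k$, where $|\lambda_k|\sim k^4q^4$ dominates the quadratic term. We then bound uniformly over the box the Newton--Kantorovich quantities
\[
Y_0\ge\nrm{AF(\bar a)}_\nu,\qquad Z_0\ge\nrm{I-ADF(\bar a)},\qquad Z_2\ge\sup\nrm{A\,D^2F}.
\]
If $Z_0<1$ and $Z_2Y_0<(1-Z_0)^2/2$, the radii polynomial theorem gives a unique zero in a ball of radius $\rho\ge\RegMinGap$ about $\bar a$. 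Continuity in $(q,r)$ follows from the parametrised version, since the contraction depends continuously on the parameters. On overlapping boxes, uniqueness in the intersection of balls makes the zeros agree (the \RegGluings\ gluings). $a_1^*>0$ follows from $\bar a_1-\rho>0$. Analyticity comes from $\nu>1$: the coefficients decay geometrically, so the sine series defines a function analytic in a strip, and bootstrapping via the $k^{-4}$ smoothing confirms it is a classical solution.

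\textbf{Onset region.} This is the main obstacle, because $D_aF$ is singular at $r=r_c(q)$ and a direct Newton--Kantorovich argument fails. We desingularise by writing $\varepsilon=\sqrt{r-r_c(q)}$ and $a=\varepsilon b$ with $b_1=\beta$ free, and rescaling so that $G(b;q,\varepsilon)=\varepsilon^{-2}F(\varepsilon b)$ (modes $k\ge2$ divided by $\varepsilon$) is regular at $\varepsilon=0$. At $\varepsilon=0$, $G$ reduces to the Landau-type equation $\beta\bigl(1+c(q)\beta^2\bigr)=0$, with $b_2$ slaved to $\beta^2/\lambda_2$. Its nontrivial root is nondegenerate provided $c(q)<0$ with $\lambda_2(q,r_c)\ne0$ on $[0.9,1.1]$, which we check. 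We apply the same parametrised Newton--Kantorovich argument to $G$ on boxes in $(q,\varepsilon)\in[0.9,1.1]\times[0,\OnsEpsMax]$ with the norm bounded by $\OnsMaxXnorm$. This yields $\nrm{a^*}_\nu\le\OnsMaxXnorm\,\varepsilon$, a uniqueness radius for $b$ that translates to $\rho\ge\OnsMinGap\,\varepsilon^2$ in $a$, and continuity up to $\varepsilon\to0$, hence bifurcation exactly at $r_c$. Excluding the trivial branch $b=0$ within the same ball requires $\beta$ bounded away from $0$ (\OnsMinNontriv). Gluing with the regular region on the overlap is done via uniqueness balls.

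\textbf{Symmetry.} The final statement is direct: the shift $x\mapsto x+\pi/q$ sends $\sin(kqx)$ to $(-1)^k\sin(kqx)$, preserving oddness and the equation.
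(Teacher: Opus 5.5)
Your architecture matches the paper's: parametrised Newton--Kantorovich on $(q,r)$-boxes for $r\ge0.25$ glued along shared edges, a blown-up problem on $(q,\varepsilon)$-boxes, and a final gluing on the overlap. The gap is in the blow-up itself, which \emph{fails as you state it}. Work in the normalisation $F_k=\lambda_ka_k+\frac{qk}{2}(a*a)_k$, where $\lambda_1=-q^2\varepsilon^2$. With the uniform substitution $a=\varepsilon b$ you get $F_k(\varepsilon b)=\varepsilon\lambda_kb_k+\varepsilon^2\frac{qk}{2}(b*b)_k$, so $\varepsilon^{-2}F_k$ is singular for $k\ge2$. If you divide these modes only by $\varepsilon$, then at $\varepsilon=0$ you get $b_k=0$ for $k\ge2$, and $\varepsilon^{-2}F_1(\varepsilon b)=-q^2\varepsilon b_1-q(b_1b_2+b_2b_3+\cdots)$ vanishes for every $\beta$. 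The limit problem therefore has a whole line of zeros, and its Jacobian has a zero column in $b_1$. No Newton--Kantorovich bounds can hold uniformly down to $\varepsilon=0$. In this scaling $b_2$ is slaved to $\varepsilon\beta^2$, not to $\beta^2$, so your Landau equation does not appear at $\varepsilon=0$.

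The blow-up must be anisotropic in both the unknowns and the equations: take $a_1=\varepsilon x_1$, $a_k=\varepsilon^2x_k$ for $k\ge2$, and set $G_1=\varepsilon^{-3}F_1$, $G_k=\varepsilon^{-2}F_k$. This is regular because $(a*a)_1=-2(a_1a_2+a_2a_3+\cdots)=-2\varepsilon^3\bigl(x_1x_2+\varepsilon\sum_{m\ge2}x_mx_{m+1}\bigr)$. At $\varepsilon=0$ it gives $G_1=-q^2x_1-qx_1x_2$, $G_2=\lambda_2x_2+qx_1^2$ and $G_k=\lambda_kx_k$ for $k\ge3$. That is exactly your Landau equation $x_1(1-x_1^2/\lambda_2)=0$ with $x_2=-qx_1^2/\lambda_2$. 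Its nontrivial root $(\sqrt{\lambda_2},-q,0,\dots)$ is nondegenerate: the $2\times2$ block has determinant $2q^2\lambda_2$, and $\lambda_2(q,r_c(q))=12q^4(5q^2-2)>0$ on $[0.9,1.1]$. This is what the $(q,\varepsilon)$-boxes need.

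Second, ``glued by local uniqueness'' hides an asymmetry. Back in the variable $a$, the onset zero is enclosed only in a ball of radius $\varepsilon r_0$ about $S(\varepsilon)\bar x$. It is unique only in balls of radius $O(\varepsilon^2)$, since $\nrm{S(\varepsilon)^{-1}}\le\varepsilon^{-2}$. So you cannot expect the onset family to absorb the regular enclosures. The paper's gluing lemma therefore uses only the uniqueness of the regular family. It suffices that the onset enclosure lie in a regular uniqueness ball at a single point, here $(q,r)=(1,0.28)$. Continuity of both families and connectedness of the overlap $\{0.25\le r\le r_c(q)+0.55^2\}$ then propagate the agreement.

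Finally, fix the sign in your $F$. Dividing the residual by $q^2k^2$ gives the linear coefficient $(1-k^2q^2)^2-r$, not $r-(1-k^2q^2)^2$. With your sign the zeros are $-a^*$, and these are not solutions, since $u\mapsto-u$ is not a symmetry on odd functions.
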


\begin{maintheorem}[Spectral instability]\label{thm:B}
For every $(q,r)\in[0.9,1.1]\times[0.25,4]$ the linearisation of \eqref{eq:nik} about $u_{q,r}$,
\[
  \mathcal L_{q,r}v=-\partial_x^2\bigl(rv-(1+\partial_x^2)^2v\bigr)-\partial_x(u_{q,r}v),
  \qquad \mathcal L_{q,r}\colon H^6(\R)\subset L^2(\R)\to L^2(\R),
\]
has $L^2$-spectrum intersecting the half-plane $\{\Re\Lambda\ge \IrMinRe\}$, and intersecting $\{\Re\Lambda\ge0.13\}$ if $r\ge\IrThreshR$. More precisely, there are $\sigma\in(0,q/2]$, $\Lambda\in\C$ and a nonzero
$2\pi/q$-periodic analytic function $P$ such that $v=e^{i\sigma x}P$ satisfies
$\mathcal L_{q,r}v=\Lambda v$ with these bounds on $\Re\Lambda$. Hence $e^{\Lambda t}v$ is a bounded-in-space
solution of the linearised equation that grows exponentially in time.
\end{maintheorem}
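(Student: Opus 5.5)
We plan to reduce Theorem~\ref{thm:B} to a finite-dimensional, validated eigenvalue problem on the Bloch fibres, and to cover the rectangle by finitely many parameter boxes. For $\sigma\in(0,q/2]$ we substitute $v=e^{i\sigma x}P$ with $P(x)=\sum_{k\in\Z}p_k e^{ikqx}$. The equation $\mathcal L_{q,r}v=\Lambda v$ then becomes the bi-infinite system
\[
  \lambda_k(\sigma)p_k-i(kq+\sigma)\sum_{j}\hat u_{k-j}p_j=\Lambda p_k,\qquad
  \lambda_k(\sigma)=(kq+\sigma)^2\bigl(r-(1-(kq+\sigma)^2)^2\bigr),
\]
where $\hat u_{\pm m}=\mp i a^*_m$ are the Fourier coefficients of the odd roll of Theorem~\ref{thm:A}.

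On each box $Q\times R$ of the cover, and for a chosen $\sigma$ (for instance $\sigma$ near the maximiser $q/2$, or a value tuned per box), we proceed as follows.
\begin{enumerate}
\item Compute a numerical eigenpair $(\bar\Lambda,\bar p)$ of a Galerkin truncation at the box centre. The pair should have $\Re\bar\Lambda$ clearly positive.
\item Set up the zero-finding map $G(\Lambda,p)=\bigl((\mathcal L^\sigma-\Lambda)p,\ \langle p,\bar p\rangle-1\bigr)$ on $\C\times\ell^1_\nu(\Z)$.
\item Apply the parametrised Newton--Kantorovich theorem already used for Theorem~\ref{thm:A}. The approximate inverse consists of a numerical inverse on the Galerkin modes and the diagonal tail $(\lambda_k(\sigma)-\bar\Lambda)^{-1}$, which decays like $|k|^{-6}$ and so dominates the first-order convolution term.
\item Produce $Y$, $Z_1$ and $Z_2$ bounds uniform over the box via Taylor models in $(q,r)$. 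The uncertainty in $a^*$ enters as a perturbation of size at most the validation radius from Theorem~\ref{thm:A}, since $a^*$ is only known in a ball.
\item Obtain a radius $\rho_\Lambda$ with $|\Lambda-\bar\Lambda|\le\rho_\Lambda$, and hence $\Re\Lambda\ge\Re\bar\Lambda-\rho_\Lambda$.
\end{enumerate}
Taking the minimum of this lower bound over boxes gives the constants $\IrMinRe$ and $0.13$ for $r\ge\IrThreshR$. Because the roll is only known through continuation, each box also has to be matched to the corresponding existence box for $a^*$.

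Next we pass from the fibre eigenvalue to the $L^2(\R)$ spectrum by a Weyl-sequence argument. The eigenvector $p\in\ell^1_\nu$ gives a $P$ that is analytic on a strip. Consequently $v=e^{i\sigma x}P$ is smooth and bounded, and $\mathcal L_{q,r}v=\Lambda v$ holds pointwise. Let $\chi_n(x)=\chi(x/n)$ be a smooth cutoff and $v_n=\chi_n v/\nrm{\chi_n v}_{L^2}$. Then $\nrm{v_n}_{L^2}=1$, and since $\nrm{\chi_n v}_{L^2}\sim\sqrt n$ while the commutator $[\mathcal L_{q,r},\chi_n]$ carries derivatives of $\chi_n$ of size $O(n^{-1})$, we get $\nrm{(\mathcal L_{q,r}-\Lambda)v_n}_{L^2}\to0$. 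Hence $\Lambda$ lies in the approximate point spectrum. The bounded-solution statement $e^{\Lambda t}v$ is immediate.

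The main obstacle is the uniform coverage of the rectangle, especially near $r=0.25$ with $q$ away from $1$, where the certified lower bound is tiny ($\IrMinRe$). We expect these boxes to need adaptive subdivision and re-optimisation of $\sigma$. A nonsmooth eigenvalue crossing (two eigenvalues colliding) would also break the single-eigenvalue Newton argument. To handle collisions we would first try switching to a different $\sigma$, or to a different eigenvalue branch, on problem boxes. If that fails, we would enclose a two-dimensional invariant subspace and bound the real parts of the eigenvalues of its $2\times2$ restriction via Gershgorin.
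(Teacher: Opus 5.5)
Your plan matches the paper's proof: a parametrised Newton--Kantorovich enclosure of one Bloch eigenpair on each $(q,r)$-box, with a finite numerical block plus the exact tail $(g(\kappa_n)-\bar\Lambda)^{-1}$, Taylor-model bounds, the roll error $r_0^{\rm roll}$ entering $Y_0$ and $Z_1$, and the roll on each box linked to the existence family by gluing, followed by the same Weyl-sequence lemma; your normalisation $\langle p,\bar p\rangle=1$ in place of $p_{n_*}=1$ is an inessential variant. The one detail you leave implicit is that a $\sigma$ fixed on a box near $q_m/2$ exceeds $q/2$ when $q<q_m$; the paper handles this by reducing $\sigma$ modulo $q$ and using the conjugation symmetry $(\sigma,\Lambda)\mapsto(-\sigma,\bar\Lambda)$, which gives a Bloch number in $(0,q/2]$ with an eigenvalue of the same real part.
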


\begin{maintheorem}[Bloch eigenvalues at sample points]\label{thm:C}
At the nine points $(q,r)\in\{0.9,1,1.1\}\times\{0.25,1,4\}$ the data in Table~\ref{tab:points} hold. In
particular, at every point there is a Bloch eigenvalue with $\Re\Lambda\ge\InstMinRe$. Moreover, at
$\sigma=q/100$ there is an eigenvalue $\Lambda_{\rm lw}$ of \eqref{eq:bloch} with $\Re\Lambda_{\rm lw}>0$ at
$\LwNpos$ points and $\Re\Lambda_{\rm lw}<0$ at $(q,r)=(0.9,4)$ and $(1.1,0.25)$.
\end{maintheorem}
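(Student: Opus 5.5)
At each of the nine points $(q,r)\in\{0.9,1,1.1\}\times\{0.25,1,4\}$ the argument has three stages: validate the roll, validate a Bloch eigenpair, and compare with the long-wave branch.

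\textbf{Stage 1: the roll.} First we use Theorem~\ref{thm:A} to pin down the roll itself. All nine points have $r\ge0.25$ and lie in $\mathcal D$. The regular-regime part of that argument therefore gives a numerical approximation $\bar a$ and a rigorous radius $\rho_0$ with
\[
\nrm{a^*(q,r)-\bar a}_\nu\le\rho_0 .
\]
The Nikolaevskiy nonlinearity $uu_x$ is quadratic and $\lnu$ is a Banach algebra under convolution. Consequently, the multiplication operator $v\mapsto\partial_x(u_{q,r}v)$ differs from the one built from $\bar a$ by an operator whose norm is controlled by $\rho_0$, uniformly on each Bloch fibre.

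\textbf{Stage 2: a Bloch eigenpair.} Fix $\sigma$ and write $v=e^{i\sigma x}P$ with $P=\sum_{k\in\Z}p_ke^{ikqx}$. The eigenproblem \eqref{eq:bloch} becomes a problem on weighted $\ell^1$ sequences:
\[
\mu_k(\sigma)p_k-i(kq+\sigma)(a*p)_k=\Lambda p_k,\qquad
\mu_k(\sigma)=(kq+\sigma)^2\bigl(r-(1-(kq+\sigma)^2)^2\bigr),
\]
where $a$ is the complex-exponential sequence of $u_{q,r}$. We close the system with a phase condition $\langle \bar p,p\rangle=1$. This gives a zero-finding problem $G(\Lambda,p)=0$ in $\C\times\ell^1_\nu(\Z)$. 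We compute an approximate eigenpair $(\bar\Lambda,\bar p)$ by a truncated Galerkin eigensolve. We then apply a Newton--Kantorovich (radii-polynomial) theorem with $Y$, $Z_0$, $Z_1$, $Z_2$ bounds. The tail bound uses the fact that $\abs{\mu_k}\sim (kq)^6$ dominates the first-order multiplication term. The roll enclosure $\rho_0$ enters the $Z_1$ bound as a perturbation term. This yields a ball of radius $\rho_{\rm e}$ about $(\bar\Lambda,\bar p)$ containing a unique true eigenpair. We then check $\Re\bar\Lambda-\rho_{\rm e}\ge\InstMinRe$ for the unstable eigenvalue, with $\sigma$ chosen near the numerically most unstable Bloch number. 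A true fibre eigenfunction gives a bounded solution on $\R$, and the Weyl-sequence argument already used for Theorem~\ref{thm:B} places $\Lambda$ in the $L^2(\R)$ spectrum. Analyticity of $P$ follows from its decay in $\ell^1_\nu$.

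\textbf{Stage 3: the long-wave branch.} We repeat the validation at $\sigma=q/100$ for the eigenvalue $\Lambda_{\rm lw}$ that continues from the neutral Galilean and translation modes at $\sigma=0$. The sign of $\Re\Lambda_{\rm lw}$ is certified whenever $\abs{\Re\bar\Lambda}>\rho_{\rm e}$. The claim that this eigenvalue is the long-wave branch is only a numerical identification, as the statement says, so it requires no rigorous branch tracking.

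\textbf{Main obstacle.} The hard step is the long-wave eigenvalue at small $\sigma$. There $\Re\Lambda_{\rm lw}=O(\sigma^2)$ is tiny, of order $10^{-4}$ relative to $q$. Moreover it sits in a cluster with a second near-neutral eigenvalue, since at $\sigma=0$ zero has algebraic multiplicity two. The inverse of the approximate derivative then has norm of order $1/\sigma$, and this inflates $Z$ and $\rho_{\rm e}$. To handle this I would first try validating the two-dimensional invariant subspace of the cluster. Concretely, I would solve for a $2\times2$ matrix $M$ and a basis $V$ with $\mathcal L V=VM$, using a Newton operator that is well conditioned because the cluster is separated from the rest of the spectrum. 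The eigenvalues of $M$ would then be enclosed by explicit $2\times2$ interval computations. If the resulting gap is too small, I would refine the Galerkin dimension and the roll enclosure $\rho_0$ until the sign is resolved.
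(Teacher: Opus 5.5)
Your outline is correct and in its essentials follows the paper's proof. The shared ingredients are a validated roll at each point, a Newton--Kantorovich enclosure of a normalised Bloch eigenpair in $\C\times\ell^1_\nu(\Z)$, the sign test $\abs{\Re\bar\Lambda}>\rho_{\rm e}$, and Lemma~\ref{lem:weyl}. In that enclosure the roll error enters both $Y_0$ and $Z_1$ through $\nrm{AK}$, not only $Z_1$ (Proposition~\ref{prop:eigbounds}, run with $M=48$).

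It differs from the paper in two places. The first is the roll. The enclosure supplied by the regular-regime covering has box radii up to $\RegMaxRzero$, limited by the $O(h^2)$ terms in $Y_0$. That is far too coarse for the long-wave sign, because the roll error is multiplied by $\nrm{AK}$, which at $\sigma=q/100$ carries the $1/\sigma$ sensitivity you describe. The paper therefore runs separate point proofs with roll radius at most $\PtsMaxRzero$. It then checks that each point-proof ball lies in the uniqueness ball of a region box containing $(q,r)$, with margin at least $\PtsMinLink$ (Proposition~\ref{prop:region}). Your fallback of ``refining $\rho_0$'' needs exactly such a Lemma~\ref{lem:glue}-type inequality, and you should state it. Without it, the sharp enclosure is of some zero of $F$, not demonstrably of $a^*(q,r)$.

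The second difference is the long-wave eigenvalue. What you call the main obstacle is not one at $\sigma=q/100$. The paper encloses $\Lambda_{\rm lw}$ exactly like the unstable eigenvalue, as a simple eigenvalue, with no subspace validation. The conditioning of order $1/\sigma\approx100$, caused by the nearby partner eigenvalue, costs roughly two digits. That is harmless once the roll is known to about $10^{-15}$ and the arithmetic carries 212 bits. The resulting radii are below $3\cdot10^{-13}$, far below $\abs{\Re\Lambda_{\rm lw}}$. Your invariant-subspace route would also work and is better conditioned, since it only sees the $O(1)$ gap to the rest of the spectrum. For a single Bloch number, however, it is extra machinery. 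Its real use is the uniform enclosure for $\sigma\in(0,q/100]$ that Remark~\ref{rem:lw} names as necessary for proving the branch identification and the sign at all small $\sigma$. Theorem~\ref{thm:C} claims neither of these.
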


\begin{remark}[What Theorem~\ref{thm:C} does not assert]\label{rem:lw}
The eigenvalue $\Lambda_{\rm lw}$ is selected as one of the two eigenvalues of smallest modulus of the
truncated operator, and numerically it lies on the branch that emanates from the double zero eigenvalue at
$\sigma=0$ (translation and Galilean modes, Section~\ref{sec:spectral}). This identification is not part of
the theorem; proving it would require enclosing the two-dimensional spectral subspace near $0$ uniformly in
$\sigma\in(0,q/100]$. Nor does $\Re\Lambda_{\rm lw}<0$ at one Bloch number exclude long-wave instability at
smaller ones.
\end{remark}

\begin{remark}[On closed-form solutions]\label{rem:exact}
Kudryashov \cite{Kudryashov2003} constructs elliptic travelling waves of
$u_t+uu_x+\beta u_{xx}+\delta u_{xxxx}+\varepsilon u_{xxxxxx}=0$ under the constraint
$\beta=10\delta^2/(121\varepsilon)$. For \eqref{eq:nik} we have $\beta=r-1$, $\delta=-2$, $\varepsilon=-1$, so
the constraint singles out $r=81/121$. Kamenov \cite{Kamenov2014} uses the ansatz
$u=a+\mu(\ln\theta_4)_{xx}$, i.e.\ $u=A+B\,\wp(kx+\omega t-z_0)$ with Weierstrass $\wp$. Such a function
has double poles. When the Laurent expansion $\wp(z)=z^{-2}+O(z^2)$ is substituted into \eqref{eq:nik}, the
coefficient of $z^{-8}$ is $-5040Bk^6$, and no other term contributes at this order. Hence such an ansatz
solves \eqref{eq:nik} only if $Bk=0$. This is consistent with the fact that the dominant balance between
$uu_x$ and $\partial_x^6u$ requires fifth-order poles.
\end{remark}

\paragraph{Outline.} Section~\ref{sec:abstract} contains the abstract fixed-point and gluing results.
Section~\ref{sec:setting} sets up the Galerkin formulation and proves that zeros are classical solutions.
Section~\ref{sec:boxes} gives the bounds on parameter boxes. Section~\ref{sec:onset} treats the onset region
by desingularisation. Section~\ref{sec:spectral} proves the spectral results. Section~\ref{sec:impl}
describes the implementation and its verification, and Section~\ref{sec:disc} discusses the results.

\section{A parametrised Newton--Kantorovich theorem and gluing}\label{sec:abstract}

\begin{theorem}\label{thm:NK}
Let $X$ be a Banach space, $\Pi$ a compact metric space, and for $p\in\Pi$ let $T_p\colon X\to X$ be
$C^1$. Assume that $(p,x)\mapsto T_p(x)$ and $(p,x)\mapsto DT_p(x)$ are continuous, that $p\mapsto\bar x_p\in X$
is continuous, and that there are constants $Y_0,Z\ge0$ and $Z_2,r_*>0$ with, for all $p\in\Pi$,
\[
  \nrm{T_p(\bar x_p)-\bar x_p}\le Y_0,\qquad \nrm{DT_p(\bar x_p)}\le Z,\qquad
  \nrm{DT_p(\bar x_p+h)-DT_p(\bar x_p)}\le Z_2\nrm h\quad(\nrm h\le r_*).
\]
Let $P(\rho)=Z_2\rho^2-(1-Z)\rho+Y_0$. If $0<r_0\le r_1\le r_*$ and $P(r_0)<0$, $P(r_1)<0$, then for every
$p\in\Pi$ the map $T_p$ has a unique fixed point $x^*_p$ in $\bar B_{r_1}(\bar x_p)$, this fixed point lies in
$\bar B_{r_0}(\bar x_p)$, and $p\mapsto x^*_p$ is continuous.
\end{theorem}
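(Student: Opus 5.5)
The plan is the standard contraction argument on a closed ball, with uniformity in $p$ because all constants are independent of $p$.

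Fix $p\in\Pi$ and $\rho\in\{r_0,r_1\}$. For $x\in\bar B_\rho(\bar x_p)$ write $x=\bar x_p+h$ with $\nrm h\le\rho\le r_*$. The mean value inequality along the segment $\bar x_p+th$, $t\in[0,1]$, gives
\[
\nrm{DT_p(\bar x_p+th)}\le Z+Z_2 t\nrm h\le Z+Z_2\rho .
\]
Hence $T_p$ is Lipschitz on $\bar B_\rho(\bar x_p)$ with constant $\kappa_\rho:=Z+Z_2\rho$.

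We also need the self-map property. Integrating $DT_p$ along the segment gives
\[
T_p(x)-\bar x_p=\bigl(T_p(\bar x_p)-\bar x_p\bigr)+\int_0^1 DT_p(\bar x_p+th)h\,dt,
\]
so
\[
\nrm{T_p(x)-\bar x_p}\le Y_0+Z\rho+\tfrac12Z_2\rho^2 .
\]
The condition $P(\rho)<0$ says $Y_0+Z\rho+Z_2\rho^2<\rho$, which is stronger than what is needed, so $T_p$ maps the ball into itself. Since $Y_0\ge0$ and $\rho>0$, $P(\rho)<0$ also forces $Z+Z_2\rho<1$, i.e.\ $\kappa_\rho<1$. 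The Banach fixed-point theorem on the closed ball (a complete metric space) then gives a unique fixed point in $\bar B_\rho(\bar x_p)$.

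Apply this with $\rho=r_0$ and with $\rho=r_1$. The fixed point in $\bar B_{r_0}$ lies in $\bar B_{r_1}$ because $r_0\le r_1$, so by uniqueness there it coincides with the fixed point found in $\bar B_{r_1}$. This gives both the existence and uniqueness claim in $\bar B_{r_1}$ and the localisation in $\bar B_{r_0}$. Only the values $\rho=r_0,r_1$ are used, so we never need $P$ to be negative on the whole interval $[r_0,r_1]$.

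Continuity of $p\mapsto x^*_p$ is the main point needing care. I would use the uniform contraction constant $\kappa:=\kappa_{r_0}<1$ on the balls of radius $r_0$. For $p,p'\in\Pi$, $x^*_{p'}$ lies in $\bar B_{r_0}(\bar x_{p'})$ but need not lie in $\bar B_{r_0}(\bar x_p)$, so the contraction estimate cannot be applied to it directly at parameter $p$. To avoid this, estimate
\[
\nrm{x^*_p-x^*_{p'}}\le\nrm{T_p(x^*_p)-T_p(x^*_{p'})}+\nrm{T_p(x^*_{p'})-T_{p'}(x^*_{p'})}.
\]
For $p'$ close to $p$, continuity of $\bar x$ gives $\nrm{\bar x_p-\bar x_{p'}}\le r_1-r_0$ whenever $r_0<r_1$. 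Then $x^*_{p'}\in\bar B_{r_1}(\bar x_p)$, where $T_p$ is a contraction with constant $\kappa_{r_1}<1$, so
\[
\nrm{x^*_p-x^*_{p'}}\le\frac{\nrm{T_p(x^*_{p'})-T_{p'}(x^*_{p'})}}{1-\kappa_{r_1}} .
\]
The numerator tends to $0$ by joint continuity of $(p,x)\mapsto T_p(x)$, but only if $x^*_{p'}$ stays in a compact set, which is not given.

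The cleaner route handles both cases, including $r_0=r_1$. Use the continuity of $\bar x$ and of $(p,x)\mapsto T_p(x)$ to build the iterates $x^{(n)}_p=T_p^n(\bar x_p)$, each of which is continuous in $p$. By the contraction estimate, $\nrm{x^{(n)}_p-x^*_p}\le\kappa^n r_0/(1-\kappa)$ uniformly in $p$. The fixed-point map is therefore a uniform limit of continuous maps and hence continuous. I expect this uniform-limit argument to be the step requiring the most care, namely checking that each iterate stays in $\bar B_{r_0}(\bar x_p)$ and that the bound is $p$-independent, but both follow from the self-map property and the fact that $Y_0,Z,Z_2$ do not depend on $p$.
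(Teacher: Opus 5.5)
Your proof is correct and follows the paper's argument: the same mean-value estimates make $T_p$ a contraction of $\bar B_\rho(\bar x_p)$ into itself for $\rho\in\{r_0,r_1\}$, and uniqueness in $\bar B_{r_1}$ then identifies the fixed point found in $\bar B_{r_0}$. The only difference is the continuity step: the paper moves to the fixed ball $\bar B_{r_0}(0)$ via $S_p(y)=T_p(\bar x_p+y)-\bar x_p$ and cites the uniform contraction principle, whereas your uniformly convergent Picard iterates $T_p^n(\bar x_p)$ prove that principle inline and, like the paper's route, do not need $r_0<r_1$.
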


\begin{proof}
Let $\rho\in\{r_0,r_1\}$ and $x\in\bar B_\rho(\bar x_p)$. By the mean value inequality,
$\nrm{DT_p(x)}\le Z+Z_2\rho$ and
\[
  \nrm{T_p(x)-\bar x_p}\le \nrm{T_p(\bar x_p)-\bar x_p}+(Z+Z_2\rho)\rho\le P(\rho)+\rho<\rho .
\]
Moreover $P(\rho)<0$ gives $Z+Z_2\rho<1-Y_0/\rho\le1$. So $T_p$ is a contraction of $\bar B_\rho(\bar x_p)$
into itself. The fixed point in $\bar B_{r_0}$ is also a fixed point in $\bar B_{r_1}$, and the latter is
unique. For continuity, set $S_p(y)=T_p(\bar x_p+y)-\bar x_p$. This is a family of contractions of the fixed
ball $\bar B_{r_0}(0)$ with a common constant $Z+Z_2r_0<1$, continuous in $p$. By the uniform contraction
principle its fixed point $y^*_p$ is continuous, and so is $x^*_p=\bar x_p+y^*_p$.
\end{proof}

In all applications $T_p(x)=x-A_pF_p(x)$ with $A_p$ injective, so fixed points of $T_p$ are exactly zeros of
$F_p$. The map $F_p$ itself does not act on $X$: its linear part grows like $k^6$. On the infinite tail,
however, $A_p$ is the exact inverse of that linear part at the current parameter value: $\lambda_k(q,r)^{-1}$
in Sections~\ref{sec:setting}--\ref{sec:onset}, and $(g(\kappa_n)-\bar\Lambda_p)^{-1}$ in
Section~\ref{sec:spectral}, with $\bar\Lambda_p$ the eigenvalue component of the centre $\bar x_p$. The
unbounded part therefore cancels identically, e.g.\ $(T_p(a))_k=-\frac{qk}{2\lambda_k}(a*a)_k$ for $k>N$,
and $T_p\colon X\to X$ is a polynomial map, continuous in $p$. When we call an approximate inverse
``constant on a box'', only its finite block is meant. The bound on $\nrm{DT_p(\bar x_p)}$ is split as $Z=Z_0+Z_1$ in the usual way: $Z_0$ measures the
finite-dimensional defect of $A_p$ and $Z_1$ the coupling to the tail.

\begin{definition}\label{def:family}
A \emph{locally unique family} of zeros of $(F_p)_{p\in\Pi}$ is a continuous map $p\mapsto x^*_p$ with
$F_p(x^*_p)=0$, together with a finite cover of $\Pi$ by closed sets $\Pi^{(\ell)}$, continuous centres
$\bar x^{(\ell)}$ on $\Pi^{(\ell)}$ and radii $r_0^{(\ell)}<r_1^{(\ell)}$, such that for $p\in\Pi^{(\ell)}$ the
point $x^*_p$ lies in $\bar B_{r_0^{(\ell)}}(\bar x^{(\ell)}_p)$ and is the only zero of $F_p$ in
$\bar B_{r_1^{(\ell)}}(\bar x^{(\ell)}_p)$.
\end{definition}

Theorem~\ref{thm:NK} provides such a family on a single box. Two such families on boxes sharing a face, which
coincide on the face (Lemma~\ref{lem:glue} below), together form a locally unique family on the union.

\begin{lemma}[Gluing]\label{lem:glue}
Let $x^{*,1}$ be a locally unique family on $\Pi_1$ in the sense of Definition~\ref{def:family}. Let
$p\mapsto x^{*,2}_p$ be any continuous family of zeros of $F_p$ on $\Pi_2$, with the same $F_p$ on
$S=\Pi_1\cap\Pi_2$. Assume that $S$ is connected and that, at one point $p_0\in S$ and for one $\ell$ with
$p_0\in\Pi_1^{(\ell)}$, there are $\bar y$ and $\rho$ with $x^{*,2}_{p_0}\in\bar B_\rho(\bar y)$ and
$\nrm{\bar y-\bar x^{1,(\ell)}_{p_0}}+\rho\le r_1^{1,(\ell)}$. Then $x^{*,1}_p=x^{*,2}_p$ for all $p\in S$.
\end{lemma}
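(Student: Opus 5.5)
The argument is a connectedness argument on $S$: the set of parameters where the two families agree is shown to be nonempty, closed and open in $S$. Put
\[
E=\{p\in S:\ x^{*,1}_p=x^{*,2}_p\}.
\]
Since both maps are continuous, $E$ is closed in $S$. Once $E$ is known to be nonempty and relatively open, connectedness of $S$ gives $E=S$.

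\emph{Nonemptiness.} At $p_0$ the triangle inequality gives
\[
\nrm{x^{*,2}_{p_0}-\bar x^{1,(\ell)}_{p_0}}\le\nrm{x^{*,2}_{p_0}-\bar y}+\nrm{\bar y-\bar x^{1,(\ell)}_{p_0}}\le\rho+\nrm{\bar y-\bar x^{1,(\ell)}_{p_0}}\le r_1^{1,(\ell)}.
\]
So $x^{*,2}_{p_0}$ is a zero of $F_{p_0}$ lying in $\bar B_{r_1^{1,(\ell)}}(\bar x^{1,(\ell)}_{p_0})$. By Definition~\ref{def:family} the only such zero is $x^{*,1}_{p_0}$, hence $p_0\in E$.

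\emph{Openness.} Let $p_1\in E$ and let $L(p_1)=\{m: p_1\in\Pi_1^{(m)}\}$. For each $m\in L(p_1)$ we have
\[
\nrm{x^{*,2}_{p_1}-\bar x^{1,(m)}_{p_1}}=\nrm{x^{*,1}_{p_1}-\bar x^{1,(m)}_{p_1}}\le r_0^{1,(m)}<r_1^{1,(m)}.
\]
The map $p\mapsto\nrm{x^{*,2}_p-\bar x^{1,(m)}_p}$ is continuous on $S\cap\Pi_1^{(m)}$, so the strict inequality $<r_1^{1,(m)}$ persists on a relative neighbourhood $U_m$ of $p_1$. Because there are finitely many closed sets $\Pi_1^{(m)}$ and the union of those with $m\notin L(p_1)$ is a closed set not containing $p_1$, there is a neighbourhood $V$ of $p_1$ in $S$ meeting only pieces with index in $L(p_1)$. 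Take $U=V\cap\bigcap_{m\in L(p_1)}U_m$. For any $p\in U$, choose a piece $m$ containing $p$; then $m\in L(p_1)$ and $p\in U_m$. Hence $x^{*,2}_p$ is a zero of $F_p$ in $\bar B_{r_1^{1,(m)}}(\bar x^{1,(m)}_p)$, and uniqueness there gives $x^{*,2}_p=x^{*,1}_p$. So $U\subset E$.

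The openness step is the only delicate part. The strict inequality $r_0<r_1$ in Definition~\ref{def:family} is what supplies the slack for continuity. The finiteness of the cover is what lets us handle a point sitting on the boundary of several pieces: we restrict to a neighbourhood that meets only the pieces actually containing $p_1$ and intersect finitely many neighbourhoods $U_m$.
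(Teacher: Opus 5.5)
Your proof is correct and follows the paper's argument: the agreement set $E$ is closed by continuity, contains $p_0$ by the triangle inequality and uniqueness in $\bar B_{r_1^{1,(\ell)}}(\bar x^{1,(\ell)}_{p_0})$, and is open in $S$ because near $p_1$ only the finitely many pieces containing $p_1$ occur, and the slack $r_0<r_1$ absorbs the continuity error. One small wording fix: take each $U_m$ to be a neighbourhood of $p_1$ in $S$, say $W_m\cap S$ with $W_m$ open, such that the strict inequality holds on $U_m\cap\Pi_1^{(m)}$; if instead $U_m\subset\Pi_1^{(m)}$, then $\bigcap_m U_m$ need not be a neighbourhood of $p_1$ when $p_1$ lies on a shared face.
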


\begin{proof}
Let $E=\{p\in S: x^{*,1}_p=x^{*,2}_p\}$.
\begin{itemize}[leftmargin=*]
\item The set $E$ is closed, because both families are continuous.
\item The point $p_0$ lies in $E$: by hypothesis $x^{*,2}_{p_0}\in\bar B_{r_1^{1,(\ell)}}(\bar x^{1,(\ell)}_{p_0})$, where the
only zero of $F_{p_0}$ is $x^{*,1}_{p_0}$.
\item The set $E$ is open in $S$. Let $p\in E$. Only finitely many cover sets $\Pi_1^{(\ell)}$ contain $p$, and the
union of the others is closed and misses $p$. Hence every $p'\in S$ close enough to $p$ lies in some
$\Pi_1^{(\ell)}\ni p$. For such $\ell$,
\[
  \nrm{x^{*,2}_{p'}-\bar x^{1,(\ell)}_{p'}}\le\nrm{x^{*,2}_{p'}-x^{*,2}_p}+\nrm{x^{*,1}_p-\bar x^{1,(\ell)}_p}
  +\nrm{\bar x^{1,(\ell)}_p-\bar x^{1,(\ell)}_{p'}}\le o(1)+r_0^{1,(\ell)}<r_1^{1,(\ell)},
\]
so $x^{*,2}_{p'}=x^{*,1}_{p'}$.
\end{itemize}
Since $S$ is connected, $E=S$.
\end{proof}

Uniqueness is required only of the first family. The second family needs only an existence enclosure at
one point. For two boxes sharing an edge, both from Theorem~\ref{thm:NK} and with centres affine in $p$, we
check $\nrm{\bar x^{i}_p-\bar x^{j}_p}+r_0^{i}<r_1^{j}$ at the two endpoints of the edge. The left-hand side
is a convex function of $p$ along the edge, so the inequality holds on the whole edge, and the zeros agree
there directly. When a box meets two smaller boxes along one side (a hanging node), the shared boundary
consists of two segments. Both centres are affine on each of them, and the check is made at the endpoints of
each segment.

\section{Galerkin formulation of steady rolls}\label{sec:setting}

\begin{figure}[t]
\centering\includegraphics[width=0.85\linewidth]{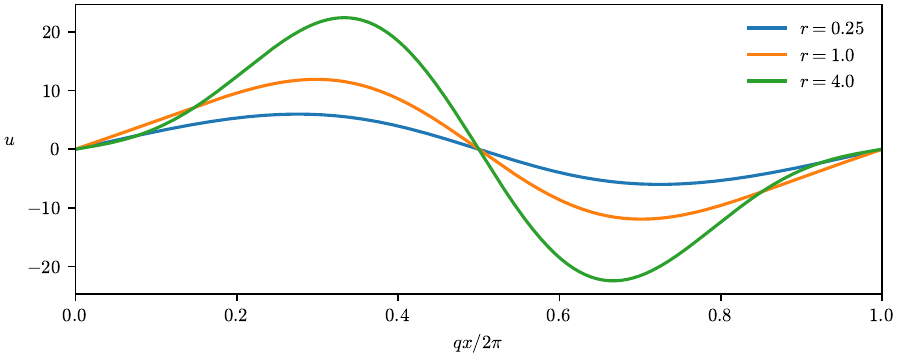}
\caption{Centres $\bar a$ of the certified rolls at $q=1$, plotted over one period. The true solution is
within $\PtsMaxRzero$ of the centre in $\lnu$.}
\label{fig:rolls}
\end{figure}

Fix $q>0$ and set $y=qx$. An odd $2\pi/q$-periodic function is written $u=\sum_{k\ge1}2a_k\sin(ky)$. Extend
$a$ oddly, $a_{-k}=-a_k$, $a_0=0$, and write $(a*b)_k=\sum_{m\in\Z}a_mb_{k-m}$. A direct computation
(verified symbolically, see Section~\ref{sec:impl}) shows that the $\sin(ky)$-coefficient of the steady
residual
\[
  uu_x+\partial_x^2\bigl(ru-(1+\partial_x^2)^2u\bigr)
\]
equals $2F_k(a)$, where
\begin{equation}\label{eq:F}
  F_k(a;q,r)=\lambda_k a_k+\frac{qk}{2}(a*a)_k,\qquad \lambda_k=q^2k^2\bigl((q^2k^2-1)^2-r\bigr),\quad k\ge1 .
\end{equation}
Let $\nu>1$ and $\lnu=\{a=(a_k)_{k\ge1}\in\R^\N:\nrm a_\nu=\sum_{k\ge1}\abs{a_k}\nu^k<\infty\}$; the spectral
problem of Section~\ref{sec:spectral} uses the complex two-sided space $\ell^1_\nu(\Z)$. On finite matrices
we use the induced norm $\nrm M=\max_j\nu^{-j}\sum_i\abs{M_{ij}}\nu^i$.

\begin{lemma}\label{lem:algebra}
For $a,b\in\lnu$, $\sum_{k\ge1}\abs{(a*b)_k}\nu^k\le2\nrm a_\nu\nrm b_\nu$.
\end{lemma}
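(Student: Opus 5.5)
We extend $a$ and $b$ to all of $\Z$ by odd reflection, $a_{-k}=-a_k$ and $a_0=0$, and compare with the two-sided weighted norm
\[
\nrm{a}_{\ell^1_\nu(\Z)}=\sum_{k\in\Z}\abs{a_k}\nu^{\abs{k}}=2\nrm a_\nu .
\]
The plan is to use the standard Banach algebra property of $\ell^1_\nu(\Z)$ with the symmetric weight $\nu^{\abs k}$, namely $\nrm{a*b}_{\ell^1_\nu(\Z)}\le\nrm a_{\ell^1_\nu(\Z)}\nrm b_{\ell^1_\nu(\Z)}$. Applied naively, this gives a factor $4$. The extra factor of $2$ is recovered from the parity of the convolution.

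The key steps are as follows.

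First, we prove the algebra property. By the triangle inequality $\abs{k}\le\abs{m}+\abs{k-m}$ and $\nu>1$, we have $\nu^{\abs k}\le\nu^{\abs m}\nu^{\abs{k-m}}$. Summing $\abs{a_m}\abs{b_{k-m}}\nu^{\abs k}$ over $k$ and $m$ and applying Tonelli then gives
\[
\sum_{k\in\Z}\abs{(a*b)_k}\nu^{\abs k}\le\Bigl(\sum_m\abs{a_m}\nu^{\abs m}\Bigr)\Bigl(\sum_j\abs{b_j}\nu^{\abs j}\Bigr)=4\nrm a_\nu\nrm b_\nu .
\]

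Second, we observe that the convolution of two odd sequences is even. Substituting $m\mapsto-m$ gives
\[
(a*b)_{-k}=\sum_m a_m b_{-k-m}=\sum_m a_{-m}b_{-k+m}=\sum_m a_m b_{k-m}=(a*b)_k .
\]
Hence $\sum_{k\ge1}\abs{(a*b)_k}\nu^k$ and $\sum_{k\le-1}\abs{(a*b)_k}\nu^{\abs k}$ are equal. Together they are bounded by the full two-sided sum, from which we drop the nonnegative $k=0$ term. Therefore
\[
2\sum_{k\ge1}\abs{(a*b)_k}\nu^k\le4\nrm a_\nu\nrm b_\nu ,
\]
which is the claim.

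There is no real obstacle here. The only point requiring care is the parity bookkeeping: the doubling of the norm under odd extension must be matched against the even symmetry of $a*b$, so that the constant comes out as $2$ rather than $4$. The same argument also shows that the bound remains valid for the complex two-sided space used later, provided we use the symmetric weight.
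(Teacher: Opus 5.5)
Your proof is correct and is essentially the paper's argument: submultiplicativity of the two-sided weight $\nu^{\abs k}$ via $\abs k\le\abs m+\abs{k-m}$, the identity that the two-sided norm of an odd extension equals $2\nrm{\cdot}_\nu$, and evenness of $a*b$ to halve the resulting factor $4$. Your explicit verification that $a*b$ is even, and your dropping of the $k=0$ term, spell out what the paper states in a single line.
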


\begin{proof}
The two-sided weighted norm $\sum_{k\in\Z}\abs{c_k}\nu^{\abs k}$ is submultiplicative under convolution,
because $\abs k\le\abs m+\abs{k-m}$. For odd extensions it equals $2\nrm{\cdot}_\nu$. The product $a*b$ is
even, so its $k\ge1$ part has at most half of its two-sided norm: $\tfrac12\cdot2\nrm a_\nu\cdot2\nrm b_\nu$.
\end{proof}

\begin{lemma}\label{lem:classical}
If $a\in\lnu$ with $\nu>1$ and $F(a;q,r)=0$, then $u=\sum2a_k\sin(kqx)$ is real analytic in the strip
$\abs{\Im x}<\ln\nu/q$ and is a classical steady solution of \eqref{eq:nik}.
\end{lemma}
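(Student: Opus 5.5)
The plan is a bootstrap: the zero condition $F(a)=0$ says exactly that the linear multiplier $\lambda_k$ balances the quadratic term, and I will use this to upgrade the decay of $a_k$ from $\nu^{-k}$ to roughly $\nu^{-k}k^{-5}$. That is enough to differentiate $u$ termwise six times.

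First, analyticity. Since $a\in\lnu$, we have $\abs{a_k}\le\nrm a_\nu\nu^{-k}$. On $\abs{\Im x}\le s<\ln\nu/q$ we have $\abs{\sin(kqx)}\le e^{kqs}$, so the series $\sum 2a_k\sin(kqx)$ is dominated termwise by $\sum 2\abs{a_k}e^{kqs}$. This converges because $e^{qs}<\nu$. The convergence is therefore uniform on every closed substrip, and $u$ is holomorphic in the open strip. Its restriction to $\R$ is real analytic and real-valued, since $a$ is real. In particular $u$ is $C^\infty$ on $\R$ and may be differentiated termwise any number of times in the interior of the strip. The reason is that $k^m\nu^{-k}e^{kqs}$ is summable for every $m$ once $s<\ln\nu/q$.

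Second, the residual. Termwise differentiation gives, for every order $m$, absolutely convergent Fourier series for $u^{(m)}$. Hence $u u_x$ and $\partial_x^2(ru-(1+\partial_x^2)^2u)$ are given by absolutely convergent series. I will use the product of the two absolutely convergent series for $u$ and $u_x$, which by Cauchy multiplication is the convolution series. By the symbolic computation behind \eqref{eq:F}, the $\sin(ky)$-coefficient of the steady residual is $2F_k(a)$, and there are no cosine terms because the residual of an odd $u$ is odd. The convolution $(a*\,(k a))$ converges absolutely by the same estimate used in Lemma~\ref{lem:algebra}, so the rearrangement into sine modes is legitimate. Since $F(a)=0$, every Fourier coefficient of the continuous periodic residual vanishes, and the residual is identically zero. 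Thus $u$ is a classical, indeed analytic, steady solution, and time-independence makes $u_t=0$ trivial.

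The only point needing care is the justification that the product $uu_x$ has Fourier coefficients given by the convolution formula $\frac{qk}{2}(a*a)_k$ up to the factor $2$. This is where absolute convergence is needed. The decay $\nu^{-k}$ of $a$ makes the weighted sum $\sum_k k\abs{a_k}$ finite, so Cauchy products of the two-sided odd extensions are absolutely convergent. I expect this bookkeeping, together with matching the normalisation of the symbolic identity, to be the only real obstacle. It is routine once everything is shown to converge absolutely.
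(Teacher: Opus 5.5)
Your argument is correct and is essentially the paper's proof. The decay $\abs{a_k}\le\nrm a_\nu\nu^{-k}$ gives uniform convergence of the series and all its derivatives on closed substrips, absolute convergence justifies termwise differentiation and the Cauchy product, and the odd residual, whose sine coefficients are $2F_k(a)=0$, therefore vanishes identically.

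The bootstrap to $\nu^{-k}k^{-5}$ decay announced in your first paragraph is never used and is not needed, since for $\nu>1$ the geometric decay already suffices. For the convolution you only need $a$ and $(ka_k)$ to lie in $\ell^1$, not Lemma~\ref{lem:algebra}; indeed $(ka_k)$ need not lie in $\ell^1_\nu$ itself.
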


\begin{proof}
We have $\abs{a_k}\le\nrm a_\nu\nu^{-k}$, so the series and all its derivatives converge uniformly on
compact subsets of the strip. The residual $R(u)$ is therefore an odd, analytic, periodic function. Its
Fourier coefficients are obtained by termwise differentiation and the Cauchy product, which are justified
by absolute convergence. They equal $2F_k(a)=0$, hence $R(u)\equiv0$.
\end{proof}

\begin{lemma}[Tail]\label{lem:tail}
Let $N\ge1$. If $q^2(N+1)^2>1+\sqrt r$, then $\lambda_k>0$ for $k\ge N+1$, and $k/\lambda_k$ is decreasing in
$k\ge N+1$. Hence $\sup_{k>N}k/\lambda_k=(N+1)/\lambda_{N+1}=:\mu_N$.
\end{lemma}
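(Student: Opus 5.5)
Write $s=q^2k^2$, which is increasing in $k$, so that $\lambda_k=s\bigl((s-1)^2-r\bigr)$. The plan is to treat $k$ as a continuous variable through $s$ and reduce both claims to elementary monotonicity statements in $s$.

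\emph{Positivity.} For $k\ge N+1$ we have $s\ge q^2(N+1)^2>1+\sqrt r$, so $s-1>\sqrt r\ge0$. Then $(s-1)^2>r$ and $\lambda_k>0$. The hypothesis $r>0$ is not needed here: for $r\le0$ the same computation gives $(s-1)^2>0\ge r$, provided we read $\sqrt r$ as $0$ in that case. In the paper's applications $r>0$ anyway.

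\emph{Monotonicity.} We have $k/\lambda_k=\frac{1}{q^2k\,((s-1)^2-r)}$, so it suffices that $g(k)=k\bigl((q^2k^2-1)^2-r\bigr)$ is increasing for real $k$ with $q^2k^2>1+\sqrt r$. This factor is a product of two positive factors. The factor $k$ is increasing. The factor $(q^2k^2-1)^2-r$ is increasing because $q^2k^2-1>\sqrt r\ge0$ and $q^2k^2-1$ increases in $k$. A product of two positive increasing functions is increasing, so $g$ is strictly increasing on this range. Hence $k/\lambda_k$ is strictly decreasing for integers $k\ge N+1$, all of which lie in the range.

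\emph{Supremum.} Since $k/\lambda_k$ is positive and decreasing on $k\ge N+1$, its supremum over $k>N$ is attained at $k=N+1$, which gives $\mu_N=(N+1)/\lambda_{N+1}$.

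There is no real obstacle here. The only point to watch is that we use the positivity of both factors before invoking monotonicity of the product.
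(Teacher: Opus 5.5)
Your proof is correct and is essentially the paper's argument: the paper writes $g(k)=k\bigl((q^2k^2-1)^2-r\bigr)$, computes $g'(k)=s^2-r+4s(s+1)$ with $s=q^2k^2-1$, and notes that this is positive for $s>\sqrt r$. The two terms of that derivative are exactly the product-rule contributions of your two positive increasing factors, so your version is the same argument without differentiating.
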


\begin{proof}
Write $g(k)=k\bigl((q^2k^2-1)^2-r\bigr)$ and $s=q^2k^2-1$. Then $k/\lambda_k=1/(q^2g(k))$ and
$g'(k)=s^2-r+4s(s+1)$, which is positive for $s>\sqrt r$.
\end{proof}

\subsection{Bounds at a point}\label{sec:pointbounds}

Let $\bar a\in\lnu$ have $\bar a_k=0$ for $k>N$. Let $J_N$ be the $N\times N$ Jacobian of
$(F_1,\dots,F_N)$ at $\bar a$, i.e.\ $(J_N)_{kj}=\lambda_k\delta_{kj}+qk(\bar a_{k-j}-\bar a_{k+j})$, and
let $A_N\approx J_N^{-1}$. Set $A=\operatorname{diag}(A_N,(\lambda_k^{-1})_{k>N})$, $T(a)=a-AF(a)$, and
$K=\operatorname{diag}(k)$.

\begin{proposition}\label{prop:pointbounds}
Under the hypothesis of Lemma~\ref{lem:tail}, the bounds required by Theorem~\ref{thm:NK} (with $\Pi$ a
point) hold for $T$ on $\lnu$, with $r_*=\infty$ and
\begin{align*}
  Y_0&=\nrm{A_NF_{\le N}(\bar a)}_\nu+\sum_{k=N+1}^{2N}\frac{\abs{F_k(\bar a)}}{\lambda_k}\nu^k,\qquad
  Z_0=\nrm{I-A_NJ_N},\\
  Z_1&=\bigl\|\,\abs{A_N}\zeta\,\bigr\|_\nu+2q\mu_N\nrm{\bar a}_\nu,\qquad
  \zeta_k=qk\max_{N-k<j\le N}\abs{\bar a_j}\nu^{-(k+j)},\\
  Z_2&=2q\max\bigl(\nrm{A_NK_N},\mu_N\bigr).
\end{align*}
Its conclusion follows whenever $P(r_0)<0$ and $P(r_1)<0$ for some $0<r_0\le r_1$.
\end{proposition}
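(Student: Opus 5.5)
The plan is to verify the three hypotheses of Theorem~\ref{thm:NK} one at a time for $T(a)=a-AF(a)$ on $\lnu$. Throughout I use that $\bar a$ has finite support and that the tail of $A$ is exactly $\lambda_k^{-1}$.

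\emph{$T$ maps $\lnu$ to $\lnu$ and $Y_0$.} For $k>N$ the linear term cancels, $(T(a))_k=-\frac{qk}{2\lambda_k}(a*a)_k$. Lemma~\ref{lem:tail} gives $qk/(2\lambda_k)\le q\mu_N/2$, and Lemma~\ref{lem:algebra} bounds the convolution, so $T$ is a bounded quadratic polynomial map on $\lnu$. For the defect $T(\bar a)-\bar a=-AF(\bar a)$, note that $\bar a*\bar a$ is supported in $\abs k\le2N$. Hence $F_k(\bar a)=0$ for $k>2N$, and the finite block plus the tail terms $N<k\le2N$ give exactly $Y_0$.

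\emph{$Z=Z_0+Z_1$.} Here $DF(\bar a)h=\Lambda h+q K(\bar a*h)$, with $\Lambda=\operatorname{diag}(\lambda_k)$ and $h$ extended oddly. I split $h=h_{\le N}+h_{>N}$ and look at $I-ADF(\bar a)$ in four blocks.
\begin{itemize}[leftmargin=*]
\item Finite–finite block: this is $I-A_NJ_N$, which gives $Z_0$.
\item Finite rows, tail columns: this is $-A_N\,[qk(\bar a*h_{>N})_k]_{k\le N}$. For $k\le N$ and a unit vector $h=\nu^{-m}e_m$ with $m>N$, the entry $(\bar a*h)_k$ involves $\bar a_{m-k}$ and $\bar a_{k+m}$, and only indices $j=m-k\le N$ can contribute. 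So the worst-case column gives $\abs{(\bar a*h)_k}\le\max_{N-k<j\le N}\abs{\bar a_j}\nu^{-(k+j)}$. This is $\zeta_k/(qk)$, and the contribution is $\nrm{\abs{A_N}\zeta}_\nu$.
\item Tail rows: here the $\lambda_k$ cancels. What remains is $-\lambda_k^{-1}qk(\bar a*h)_k$, of norm at most $q\mu_N\cdot2\nrm{\bar a}_\nu\nrm h_\nu$ by Lemmas~\ref{lem:tail} and~\ref{lem:algebra}. This gives the term $2q\mu_N\nrm{\bar a}_\nu$.
\end{itemize}
The operator norm on $\lnu$ is the supremum over weighted unit columns, so summing the column contributions yields $Z_0+Z_1$.

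\emph{$Z_2$.} $DT(\bar a+h)-DT(\bar a)=-A\,qK(h*\cdot)$, globally and with no restriction on $\nrm h$, so $r_*=\infty$.
\begin{itemize}[leftmargin=*]
\item Finite rows: the norm is at most $q\nrm{A_NK_N}\cdot2\nrm h\nrm{\cdot}$.
\item Tail rows: the norm is at most $q\mu_N\cdot2\nrm h\nrm\cdot$.
\end{itemize}
Combining the two row blocks by a max, rather than a sum, requires an argument, because the $\ell^1_\nu$ norm adds the two blocks. Write $c=h*g$. Then the image norm is $\nrm{A_NK_Nc_{\le N}}+\sum_{k>N}\frac{k}{\lambda_k}\abs{c_k}\nu^k\le\max(\nrm{A_NK_N},\mu_N)\sum_{k\ge1}\abs{c_k}\nu^k$. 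Lemma~\ref{lem:algebra} then yields $2q\max(\cdot)\nrm h\nrm g$. The final sentence of the statement is then just Theorem~\ref{thm:NK} with $\Pi$ a point.

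The step I expect to be the main obstacle is the index bookkeeping behind $\zeta_k$ in the tail-to-finite coupling. I would first write out the odd extension carefully, $(\bar a*h)_k=\sum_m\bar a_{k-m}h_m$ with $h_{-m}=-h_m$. Then I check which $m>N$ give nonzero $\bar a_{k\mp m}$; only $m=k+j$ with $1\le j\le N$ and $m>N$ survive, i.e.\ $j>N-k$. Finally I confirm the weights $\nu^{-m}=\nu^{-(k+j)}$ and the factor $qk$ (rather than $2qk$) coming from the derivative of $\frac{qk}{2}(a*a)_k$.
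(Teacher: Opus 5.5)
Your proposal is correct and follows the paper's proof essentially step by step: the support of $F(\bar a)$ in $k\le2N$ gives $Y_0$; $I-A\,DF(\bar a)$ is split into the finite block $I-A_NJ_N$, the finite-row/tail-column coupling bounded through $\zeta_k$ (your index bookkeeping matches the paper's ``only $m=-j$ with $j>N-k$''), and the tail rows bounded by Lemmas~\ref{lem:tail} and~\ref{lem:algebra}; and $Z_2$ comes from $\nrm{AK}=\max(\nrm{A_NK_N},\mu_N)$, which you verify explicitly. The paper adds one remark you omit: $Z_0<1$, which is forced by $P(r_0)<0$, makes $A_N$ and hence $A$ injective, so the fixed point of $T$ is a zero of $F$.
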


\begin{proof}
The residual $F(\bar a)$ is supported in $k\le2N$, which gives $Y_0$. Write
\[
  I-A\,DF(\bar a)=(I-AA^\dagger)+A\bigl(A^\dagger-DF(\bar a)\bigr),\qquad
  A^\dagger=\operatorname{diag}\bigl(J_N,(\lambda_k)_{k>N}\bigr).
\] The first term is $I-A_NJ_N$ on the first $N$
modes and zero on the tail.
On rows $k\le N$, $(A^\dagger-DF(\bar a))h=-qk(\bar a*h_{\rm tail})_k$. Since $\bar a$ has support in
$\abs m\le N$, only $m=-j$ with $j>N-k$ contributes, so $\abs{qk(\bar a*h_{\rm tail})_k}\le\zeta_k\nrm h_\nu$.
On rows $k>N$, $(A(A^\dagger-DF(\bar a))h)_k=-(qk/\lambda_k)(\bar a*h)_k$, and Lemmas~\ref{lem:algebra}
and~\ref{lem:tail} bound its norm by $2q\mu_N\nrm{\bar a}\nrm h$.
Finally $DF(\bar a+h)-DF(\bar a)=qK(h*\cdot)$, so $\nrm{A(DF(\bar a+h)-DF(\bar a))}\le q\nrm{AK}\,2\nrm h$, and
$\nrm{AK}=\max(\nrm{A_NK_N},\mu_N)$. Injectivity of $A$ follows from $Z_0<1$.
\end{proof}

\section{Parameter boxes}\label{sec:boxes}

Let $\Pi=[q_m-h_q,q_m+h_q]\times[r_m-h_r,r_m+h_r]$ and write $q=q_m+s$, $r=r_m+t$. We use an affine centre
$\bar a(s,t)=a^0+s\,a^q+t\,a^r$, where $a^q,a^r$ are numerical tangents. Every quantity is then a polynomial in
$(s,t)$: $\lambda_k$ has degree $6$ in $s$ and $1$ in $t$, and $F$ is quadratic in $a$. Its coefficients
$c_{ij}$ are computed in interval arithmetic and bounded on $\Pi$ by
\begin{equation}\label{eq:taylor}
  \Bigl|\sum_{i,j}c_{ij}s^it^j\Bigr|\le\sum_{i,j}\abs{c_{ij}}h_q^ih_r^j .
\end{equation}
The coefficients of order zero and one in $F(\bar a(s,t);s,t)$ are the defects
\[
  \eta=F(a^0;q_m,r_m),\qquad \eta^p=J\,a^p+\partial_pF(a^0;q_m,r_m)\quad(p=q,r)
\]
of the centre and of the tangents. They are small but not zero, and \eqref{eq:taylor} bounds them together with
all higher coefficients, so that, with $A_p$ the approximate inverse at $p\in\Pi$ (its tail depends on $p$),
\[
  Y_0\le\sup_{p\in\Pi}\nrm{A_p\eta}+h_q\sup_{p\in\Pi}\nrm{A_p\eta^q}+h_r\sup_{p\in\Pi}\nrm{A_p\eta^r}+Ch^2 ,
\]
where $h=\max(h_q,h_r)$ and $C$ bounds the terms of order two and higher uniformly on $\Pi$.
Likewise
\[
  \nrm{I-A_NJ_N(s,t)}\le\nrm{I-A_NJ_N^{(0,0)}}+\sum_{(i,j)\ne(0,0)}\nrm{A_NJ_N^{(i,j)}}h_q^ih_r^j .
\]
The bounds $Z_1$, $Z_2$ and the tail are those of Proposition~\ref{prop:pointbounds}, with $q,r$ replaced by
intervals and $\abs{\bar a_j}$ by $\abs{a^0_j}+h_q\abs{a^q_j}+h_r\abs{a^r_j}$. For $q^2k^2>1+\sqrt r$,
$\lambda_k$ is increasing in $q$ and decreasing in $r$, as $\partial_s[s((s-1)^2-r)]>0$ for $s=q^2k^2$.
Hence the worst tail constant $\mu_N$ on a box occurs at the corner $(q_m-h_q,r_m+h_r)$. We evaluate it in
interval arithmetic, which encloses this value. Nontriviality,
$a^*_1\ne0$ on $\Pi$, follows from $\abs{a^0_1}-h_q\abs{a^q_1}-h_r\abs{a^r_1}>r_0$.

\begin{proposition}\label{prop:region}
The rectangle $[0.9,1.1]\times[0.25,4]$ is covered by $\RegBoxes$ closed boxes. Of the 750 boxes of an initial
$10\times75$ grid, $\RegSplit$ were subdivided once into four congruent sub-boxes. On every box Theorem~\ref{thm:NK} holds with
$Y_0\le\RegMaxYzero$, $Z_0+Z_1\le\RegMaxZ$, $r_0\le\RegMaxRzero$ and $r_1\ge\RegMinRuniq$, and the
nontriviality margin is at least $\RegMinNontriv$. All $\RegGluings$ shared edges satisfy the gluing
inequality, with margin at least $\RegMinGlue$, and the adjacency graph is connected. Each of the nine point
proofs of Table~\ref{tab:points} lies in the uniqueness ball of a region box containing its $(q,r)$, with
margin at least $\PtsMinLink$; hence the rolls of Table~\ref{tab:points} belong to this family.
\end{proposition}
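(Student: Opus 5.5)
The proof of Proposition~\ref{prop:region} is a finite computation, organised so that each reported constant comes from a result proved above. First I fix the cover. The initial $10\times75$ grid on $[0.9,1.1]\times[0.25,4]$ has $h_q=0.01$ and $h_r=0.025$. The $\RegSplit$ boxes on which the first attempt fails are quartered, which gives $750-\RegSplit+4\cdot\RegSplit=\RegBoxes$ boxes. For each box I compute $a^0$ by Newton's method at the midpoint $(q_m,r_m)$. I obtain the tangents $a^q,a^r$ by solving $J\,a^p=-\partial_pF(a^0)$ numerically, and I take $A_N$ as a floating-point inverse of $J_N(a^0;q_m,r_m)$. All further quantities are then evaluated in interval arithmetic.

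I would check Lemma~\ref{lem:tail} on the whole box by verifying $q^2(N+1)^2>1+\sqrt r$ at the corner with smallest $q$ and largest $r$. Next comes $Y_0$: I expand $F(\bar a(s,t);s,t)$ as a polynomial in $(s,t)$ and bound it with \eqref{eq:taylor}, applying the finite block of $A_N$ and the tail factors $\lambda_k^{-1}$. These tail factors vary with $p$, so I take their supremum over the box. The tail is monotone in $q$ and $r$, so this supremum sits at the worst corner. $Z_0$ comes from the expansion of $I-A_NJ_N(s,t)$ given above. $Z_1$ and $Z_2$ come from Proposition~\ref{prop:pointbounds}, using interval values of $q$, of $\mu_N$, and of $\abs{a^0_j}+h_q\abs{a^q_j}+h_r\abs{a^r_j}$. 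With these constants, Theorem~\ref{thm:NK} applies on each box once I exhibit $r_0\le\RegMaxRzero$ and $r_1\ge\RegMinRuniq$ with $P(r_0),P(r_1)<0$. Since $r_*=\infty$, any $r_1$ below the larger root of $P$ works. The nontriviality margin is checked directly from $\abs{a^0_1}-h_q\abs{a^q_1}-h_r\abs{a^r_1}-r_0$.

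Next I build the global family. For every pair of boxes sharing an edge, or an edge segment at a hanging node, I verify $\nrm{\bar x^i_p-\bar x^j_p}+r_0^i<r_1^j$ at the segment endpoints. The convexity remark after Lemma~\ref{lem:glue} then extends this to the whole segment, so Lemma~\ref{lem:glue} identifies the two zeros along it. I then check that the adjacency graph is connected, for instance by a breadth-first search. Gluing along a spanning tree then yields a single continuous map on the rectangle. This map is well defined at points lying in several boxes: any two boxes containing such a point share at least a corner, and I handle these corner incidences by requiring the gluing inequality for them as well. Alternatively, the edge gluings around that vertex already force agreement there.

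For the nine point proofs, each point proof supplies an enclosure $\bar B_{\rho}(\bar y)$ of its zero. I choose a region box containing the point and check $\nrm{\bar y-\bar a(p)}+\rho\le r_1$ for that box. Lemma~\ref{lem:glue}, applied with $S$ equal to the single point, then shows that the two zeros are the same. I expect the main obstacle to be getting $Z_0+Z_1<1$ and a positive gluing margin uniformly on boxes near large $r$. There the tail constant $\mu_N$ and the second-order Taylor terms in $h_r$ grow, so $N$ and the box sizes have to be tuned. Refining the failing boxes, which is the role of the $\RegSplit$ splits, is what I would try first.
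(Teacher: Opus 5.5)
Your proposal is correct and follows essentially the paper's route. The ingredients match: affine centres with a box-constant finite block $A_N$ and exact tail $\lambda_k(p)^{-1}$, Taylor-model bounds \eqref{eq:taylor} for $Y_0$ and $Z_0$, Proposition~\ref{prop:pointbounds} with interval data and the worst-corner $\mu_N$ for $Z_1,Z_2$, endpoint-plus-convexity gluing checks on every shared edge segment, and a one-point link of each of the nine point proofs into the uniqueness ball of a region box. One small correction: the spanning tree plays no role. The glued map is well defined because the zeros agree on all shared edges, which you do check, and corner-only incidences are then forced by the edge gluings around the vertex, as in your second option.
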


\section{The onset region: desingularisation}\label{sec:onset}

\begin{figure}[t]
\centering\includegraphics[width=0.85\linewidth]{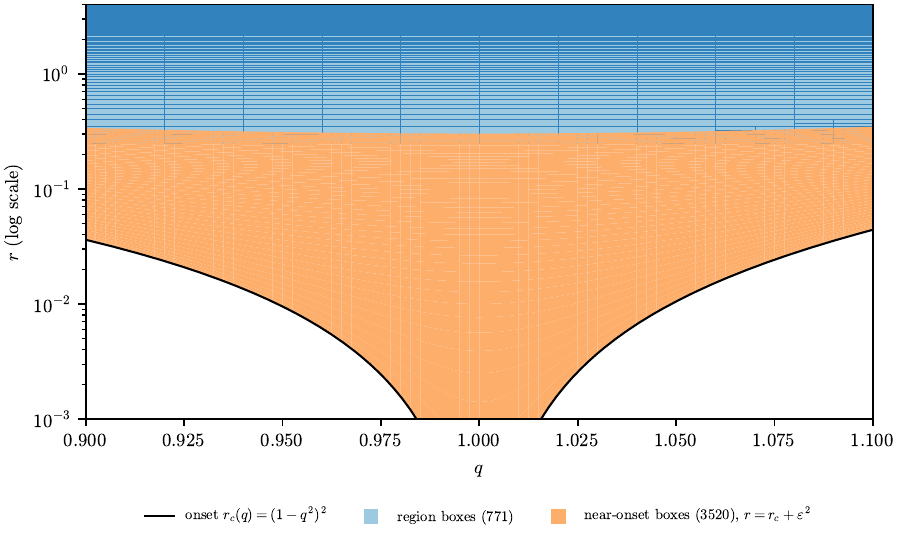}
\caption{Certified covering of the existence region $\mathcal D$. Blue: parameter boxes of
Proposition~\ref{prop:region}. Orange: boxes of Proposition~\ref{prop:onset} mapped to $(q,r)$ through
$r=r_c(q)+\varepsilon^2$. The black curve is the onset curve $r_c(q)$.}
\label{fig:cover}
\end{figure}

As $r\downarrow r_c(q)$ the rolls shrink to zero and $J_N$ degenerates, so the bounds of
Section~\ref{sec:boxes} cannot reach the onset curve. We blow up the singularity.

\begin{lemma}\label{lem:blowup}
Let $r=r_c(q)+\varepsilon^2$ with $0<\varepsilon\le1$, and set $a=S(\varepsilon)x=\varepsilon c(x)$ with
$c(x)=(x_1,\varepsilon x_2,\varepsilon x_3,\dots)$, that is, $S(\varepsilon)x=(\varepsilon x_1,\varepsilon^2x_2,\varepsilon^2x_3,\dots)$. Then $F_1(a)=\varepsilon^3G_1(x)$ and
$F_k(a)=\varepsilon^2G_k(x)$ for $k\ge2$, where
\begin{align*}
  G_1(x)&=-q^2x_1-q\Bigl(x_1x_2+\varepsilon\sum_{m\ge2}x_mx_{m+1}\Bigr),\\
  G_k(x)&=\lambda_kx_k+\frac{qk}{2}\bigl(c(x)*c(x)\bigr)_k,\qquad k\ge2 .
\end{align*}
The map $G$ is polynomial in $(x,q,\varepsilon)$ and extends to $\varepsilon=0$.
\end{lemma}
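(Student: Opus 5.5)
The identity is a direct substitution, so I would verify it mode by mode: first $k=1$, where the cancellation in $\lambda_1$ supplies the extra factor $\varepsilon$, then $k\ge2$, and finally polynomiality and the extension to $\varepsilon=0$. I expect no real obstacle. The only step needing care is the sign bookkeeping in $(a*a)_1$ under the odd extension $a_{-k}=-a_k$, $a_0=0$.

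\emph{Mode $k=1$.} From \eqref{eq:F}, $\lambda_1=q^2\bigl((q^2-1)^2-r\bigr)=q^2\bigl(r_c(q)-r\bigr)=-q^2\varepsilon^2$. Hence $\lambda_1a_1=-q^2\varepsilon^2\cdot\varepsilon x_1=-\varepsilon^3q^2x_1$. For the quadratic term I would expand $(a*a)_1=\sum_{m\in\Z}a_ma_{1-m}$ using the odd extension.
\begin{itemize}[leftmargin=*]
\item The terms $m=0$ and $m=1$ vanish because $a_0=0$.
\item For $m\ge2$ we get $a_ma_{1-m}=-a_ma_{m-1}$.
\item For $m=-j\le-1$ we get $a_{-j}a_{1+j}=-a_ja_{j+1}$.
\end{itemize}
Therefore $(a*a)_1=-2\sum_{m\ge1}a_ma_{m+1}$, and $\tfrac q2(a*a)_1=-q\sum_{m\ge1}a_ma_{m+1}$. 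Inserting $a_1=\varepsilon x_1$ and $a_m=\varepsilon^2x_m$ for $m\ge2$, the $m=1$ term is $\varepsilon^3x_1x_2$ and each $m\ge2$ term is $\varepsilon^4x_mx_{m+1}$. Factoring out $\varepsilon^3$ gives exactly the stated $G_1$.

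\emph{Modes $k\ge2$.} Here $a_k=\varepsilon^2x_k$, so $\lambda_ka_k=\varepsilon^2\lambda_kx_k$. Since $a=\varepsilon c(x)$ and convolution is bilinear, $(a*a)_k=\varepsilon^2(c(x)*c(x))_k$. This gives $F_k(a)=\varepsilon^2G_k(x)$, with $\lambda_k$ now read as $q^2k^2\bigl((q^2k^2-1)^2-r_c(q)-\varepsilon^2\bigr)$.

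\emph{Polynomiality and $\varepsilon=0$.} After substituting $r=r_c(q)+\varepsilon^2$, every coefficient $\lambda_k$ is a polynomial in $(q,\varepsilon)$. The map $c(x)$ is linear in $x$ with coefficients in $\{1,\varepsilon\}$, so each $G_k$ is a quadratic form in $x$ with coefficients polynomial in $(q,\varepsilon)$. None of these expressions contains a negative power of $\varepsilon$, so they make sense at $\varepsilon=0$. There $G_1=-q^2x_1-qx_1x_2$ and $c(x)=(x_1,0,0,\dots)$.

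To make ``polynomial map'' meaningful on the sequence space, I would add one remark. For $0\le\varepsilon\le1$ we have $\nrm{c(x)}_\nu\le\nrm x_\nu$, so Lemma~\ref{lem:algebra} bounds the quadratic part of $G$ on $\lnu$ uniformly in $\varepsilon$. As before, the unbounded linear part $\lambda_kx_k$ is handled by the tail inverse.
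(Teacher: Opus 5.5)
Your proposal is correct and follows the paper's proof. The paper likewise uses $\lambda_1=q^2(r_c-r)=-q^2\varepsilon^2$ and the odd-extension identity $(c*c)_1=-2\sum_{m\ge1}c_mc_{m+1}$ (you write it for $a$ rather than $c$, which changes only a factor $\varepsilon^2$), leaves the rest as direct substitution, and your term-by-term sign bookkeeping and the remark bounding the quadratic part via Lemma~\ref{lem:algebra} are accurate elaborations rather than a different route.
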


\begin{proof}
We have $\lambda_1=q^2(r_c-r)=-q^2\varepsilon^2$ and $(c*c)_1=-2\sum_{m\ge1}c_mc_{m+1}$, which is divisible
by $\varepsilon$. The rest is direct substitution.
\end{proof}

\begin{lemma}\label{lem:nondeg}
Let $q\in[0.9,1.1]$ and $\varepsilon=0$. The sequence $x^0=(\sqrt{\lambda_2},-q,0,0,\dots)$ solves $G=0$, where
\[
  \lambda_2=\lambda_2\bigl(q,r_c(q)\bigr)=4q^2\bigl((4q^2-1)^2-r_c(q)\bigr).
\]
Let $Y_\nu=\{b:\sum_{k\ge1}\abs{b_k}\nu^kk^{-6}<\infty\}$. Then $G\colon\lnu\to Y_\nu$ is $C^1$, and
$DG(x^0)\colon\lnu\to Y_\nu$ is a bounded isomorphism: its $2\times2$ block in $(x_1,x_2)$ has determinant
$2q^2\lambda_2>0$, and in the remaining variables it is diagonal with entries $\lambda_k$, $k\ge3$, which
satisfy $c_0k^6\le\lambda_k\le C_0k^6$ with $c_0>0$. Here $\lambda_2=12q^4(5q^2-2)\ge16.1$.
\end{lemma}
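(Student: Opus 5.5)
The plan is to verify each assertion directly from the explicit form of $G$ in Lemma~\ref{lem:blowup}, specialised to $\varepsilon=0$. The one structural fact that drives everything is this: at $\varepsilon=0$ we have $c(x)=(x_1,0,0,\dots)$, so with the odd extension $c_{\pm1}=\pm x_1$ and all other entries zero. Hence $(c*c)_2=x_1^2$ and $(c*c)_k=0$ for $k\ge3$. The sum $\varepsilon\sum_{m\ge2}x_mx_{m+1}$ in $G_1$ also drops out. So at $\varepsilon=0$ the system reads
\[
  G_1=-q^2x_1-qx_1x_2,\qquad G_2=\lambda_2x_2+qx_1^2,\qquad G_k=\lambda_kx_k\quad(k\ge3).
\]

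\emph{The zero.} Take $x_1\ne0$. Then $G_1=0$ forces $x_2=-q$, and $G_2=0$ then gives $x_1^2=\lambda_2$. For $k\ge3$ we need $\lambda_k\ne0$, which gives $x_k=0$; positivity of $\lambda_k$ is checked in the last step. This produces $x^0$.

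The value of $\lambda_2$ at $r=r_c(q)$ follows by factoring a difference of squares:
\[
  (4q^2-1)^2-(1-q^2)^2=3q^2(5q^2-2),\qquad\text{so}\qquad \lambda_2=12q^4(5q^2-2).
\]
This is increasing in $q$, and at $q=0.9$ it equals $12\cdot0.6561\cdot2.05>16.1$.

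\emph{Jacobian at $x^0$.} Differentiating the $\varepsilon=0$ system gives the following.
\begin{itemize}[leftmargin=*]
\item $\partial_{x_1}G_1=-q^2-qx_2=0$ at $x^0$, and $\partial_{x_2}G_1=-qx_1$.
\item $\partial_{x_1}G_2=2qx_1$ and $\partial_{x_2}G_2=\lambda_2$.
\end{itemize}
The determinant of the $(x_1,x_2)$ block is therefore $2q^2x_1^2=2q^2\lambda_2>0$. Neither $G_1$ nor $G_2$ depends on $x_k$ for $k\ge3$, and $G_k$ depends only on $x_k$. So $DG(x^0)$ is block diagonal: the invertible $2\times2$ block, plus $\operatorname{diag}(\lambda_k)_{k\ge3}$.

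\emph{Functional setting and the isomorphism.} I will carry this out in three steps.
\begin{itemize}[leftmargin=*]
\item \emph{Two-sided bounds on $\lambda_k$.} For $k\ge3$ and $q\in[0.9,1.1]$ we have $9q^2-1>1-q^2\ge\cdots$, indeed $q^2k^2-1\ge 9\cdot0.81-1>\abs{1-q^2}+1$. Hence $(q^2k^2-1)^2-r_c(q)>0$, and $\lambda_k/k^6$ is bounded above and below by positive constants uniformly in $k\ge3$ and $q$. The leading coefficient is $q^6$, and the lower-order terms are controlled for $k\ge3$. This gives $c_0k^6\le\lambda_k\le C_0k^6$.
\item \emph{$G$ is $C^1$ from $\lnu$ to $Y_\nu$.} The linear part $x\mapsto(\lambda_kx_k)$ maps $\lnu\to Y_\nu$ boundedly, with norm at most $C_0$. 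The quadratic part has entries $\frac{qk}{2}(c*c)_k$. Since $k\,\nu^kk^{-6}\le\nu^k$, Lemma~\ref{lem:algebra} bounds it in $Y_\nu$ by $q\nrm{c}_\nu^2$. It is therefore a bounded quadratic map, hence smooth. Polynomial dependence on $(q,\varepsilon)$ preserves all of this.
\item \emph{Bounded inverse.} The inverse of $DG(x^0)$ is the inverse $2\times2$ matrix on the first two coordinates together with $\operatorname{diag}(\lambda_k^{-1})$ on the rest. The latter maps $Y_\nu\to\lnu$ with norm at most $1/c_0$, since $\abs{x_k}\nu^k=\abs{b_k}\nu^kk^{-6}\cdot k^6/\lambda_k$. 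So the inverse is bounded.
\end{itemize}

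The only step needing any care is the uniform lower bound $c_0>0$ for $k\ge3$. That bound is what makes the inverse bounded from the weak space $Y_\nu$ back to $\lnu$. Everything else is algebra.
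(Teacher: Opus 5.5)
Your proposal is correct and matches the paper, which gives no separate proof and simply records these facts inline in the statement. As in the paper, you specialise to $\varepsilon=0$, where only $(c*c)_2=x_1^2$ survives; this gives the zero, the $2\times2$ block with determinant $2q^2\lambda_2$, the diagonal tail $\lambda_k\asymp k^6$, and $\lambda_2=12q^4(5q^2-2)$.

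The one step you only sketch, the uniform lower bound $c_0$, can be made explicit: for $k\ge3$,
\[
  \lambda_k/k^6=q^2\bigl((q^2-k^{-2})^2-r_c(q)k^{-4}\bigr)\ge0.81\,(0.698^2-0.0441/81)>0.39 .
\]
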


The bounds for $G$ follow the same pattern as in Proposition~\ref{prop:pointbounds}, with $\bar a$ replaced by
$c(\bar x)=(\bar x_1,\varepsilon\bar x_2,\dots)$. This applies in $\zeta_k$ for $2\le k\le N$, where the tail
enters through $c(h)_j=\varepsilon h_j$, giving
$\zeta_k=qk\varepsilon\max_{N-k<j\le N}\abs{c(\bar x)_j}\nu^{-(k+j)}$, and in the tail term
$2q\mu_N\nrm{c(\bar x)}_\nu$, using $\nrm{c(h)}_\nu\le\nrm h_\nu$ for $\varepsilon\le1$. Two further changes
are needed. The tail coupling of row $1$ is $\zeta_1=q\varepsilon\abs{\bar x_N}\nu^{-(N+1)}$. The second derivative is
$D^2G=2B$ with $B_1(h,h')=-\tfrac q2\bigl[h_1h_2'+h_2h_1'+\varepsilon\sum_{m\ge2}(h_mh'_{m+1}+h'_mh_{m+1})\bigr]$
and $B_k(h,h')=\tfrac{qk}{2}(c(h)*c(h'))_k$, which yields
$Z_2=q\bigl(2+2\nu^{-2}+2\varepsilon\nu^{-4}\bigr)\max(\nrm{A_NK_N},\mu_N)$.

\begin{proposition}\label{prop:onset}
The rectangle $(q,\varepsilon)\in[0.9,1.1]\times[0,\OnsEpsMax]$ is covered by $\OnsBoxes$ boxes on which
Theorem~\ref{thm:NK} holds for $G$, with $Y_0\le\OnsMaxYzero$, $Z_0+Z_1\le\OnsMaxZ$ and $r_0\le\OnsMaxRzero$,
and $\abs{\bar x_1}-r_0\ge\OnsMinNontriv$. All $\OnsGluings$ gluing inequalities hold, with margin at least
$\OnsMinGlue$, and the adjacency graph is connected. At $(q,r)=(1,0.28)$, which lies in both the onset family
and the family of Proposition~\ref{prop:region}, the inequality of Lemma~\ref{lem:glue} holds with margin
$\OnsLinkMargin$.
\end{proposition}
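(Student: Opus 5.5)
The proof is computational and runs in the same framework as Proposition~\ref{prop:region}, with $F$ replaced by the desingularised map $G$ of Lemma~\ref{lem:blowup}. The starting point is the exact solution $x^0(q)=(\sqrt{\lambda_2},-q,0,\dots)$ at $\varepsilon=0$ from Lemma~\ref{lem:nondeg}, where $DG(x^0)$ is an isomorphism. From there I numerically continue a solution $\bar x(q,\varepsilon)$ of the Galerkin truncation $G_{\le N}=0$ in $\varepsilon$ up to $\OnsEpsMax$ on a grid in $(q,\varepsilon)$. At each grid node I compute tangents $\bar x^q,\bar x^\varepsilon$ by solving the linearised system. On every box $[q_m\pm h_q]\times[\varepsilon_m\pm h_\varepsilon]$ I take the affine centre $\bar x(s,t)=x^0+s\bar x^q+t\bar x^\varepsilon$ and a constant finite block $A_N\approx J_N^{-1}$, with tail $\lambda_k(q,r_c(q)+\varepsilon^2)^{-1}$. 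This makes $T(x)=x-AG(x)$ a polynomial self-map of $\lnu$, as explained after Theorem~\ref{thm:NK}.

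Since $G$ is polynomial in $(x,q,\varepsilon)$ and $\lambda_k$ is polynomial in $(q,\varepsilon^2)$, all bounds are Taylor polynomials in $(s,t)$ and can be bounded by \eqref{eq:taylor}. The bounds themselves are as follows.
\begin{itemize}[leftmargin=*]
\item $Y_0$ comes from the defects of the centre and tangents plus higher coefficients.
\item $Z_0=\nrm{I-A_NJ_N(s,t)}$ is bounded as in Section~\ref{sec:boxes}.
\item $Z_1$ uses the modified $\zeta_k$ and $\zeta_1=q\varepsilon\abs{\bar x_N}\nu^{-(N+1)}$.
\item $Z_2=q(2+2\nu^{-2}+2\varepsilon\nu^{-4})\max(\nrm{A_NK_N},\mu_N)$.
\end{itemize}
For the tail constant $\mu_N$ I use Lemma~\ref{lem:tail} with $r=r_c(q)+\varepsilon^2\le r_c+\OnsEpsMax^2$, and the monotonicity in $q$ and $r$ gives the worst corner. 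Row $1$ needs no special treatment, because the factor $\varepsilon^3$ was divided out exactly. I then pick $r_0<r_1$ with $P(r_0),P(r_1)<0$, and Theorem~\ref{thm:NK} gives a continuous zero on each box. Nontriviality follows from $\abs{\bar x_1}-h_q\abs{\bar x^q_1}-h_\varepsilon\abs{\bar x^\varepsilon_1}-r_0>0$, and indeed $\bar x_1\approx\sqrt{\lambda_2}\ge4$.

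Next comes gluing. For each shared edge, including hanging nodes, I check $\nrm{\bar x^i_p-\bar x^j_p}+r_0^i<r_1^j$ at the edge endpoints. By the convexity remark after Lemma~\ref{lem:glue}, this gives agreement on the whole edge. Connectedness of the adjacency graph, checked combinatorially, then yields a single locally unique family on the whole rectangle by repeated application of Lemma~\ref{lem:glue}.

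The link to Proposition~\ref{prop:region} is made at $(q,r)=(1,0.28)$, that is, $\varepsilon=\sqrt{0.28}\approx0.529<\OnsEpsMax$. The onset enclosure $\bar B_{r_0}(\bar x)$ maps under $S(\varepsilon)$ to a set contained in $\bar B_{\rho}(\bar y)$, with $\bar y=S(\varepsilon)\bar x$ and $\rho=\varepsilon r_0$, since $\nrm{S(\varepsilon)h}_\nu\le\varepsilon\nrm h_\nu$ for $\varepsilon\le1$. I then verify $\nrm{\bar y-\bar a}+\rho\le r_1$ for the region box containing $(1,0.28)$, which is the inequality of Lemma~\ref{lem:glue}. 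Both families live on the connected overlap, and the $r$-interval over $q\in[0.9,1.1]$ is connected since $r_c\le0.0441<0.25$, so Lemma~\ref{lem:glue} identifies them there.

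The main obstacle is near $\varepsilon=0$ and at the large-$\varepsilon$ end. At $\varepsilon=0$ the isomorphism holds but the $2\times2$ block has determinant $2q^2\lambda_2$, which is moderate, so $\nrm{A_NK_N}$ and hence $Z_2$ may be large. For $\varepsilon$ near $\OnsEpsMax$, where $\varepsilon^2\approx0.3$ must overlap $r\ge0.25$, the scaled solution drifts from $x^0$ and the Taylor remainders grow. I would first refine boxes adaptively in $\varepsilon$ (many thin boxes, hence $\OnsBoxes$) and choose $N$ and $\nu=1.05$ so that $\mu_N$ is small enough. If $Z_0+Z_1$ still fails near $\OnsEpsMax$, I would recompute $A_N$ per box at the box midpoint rather than once per column.
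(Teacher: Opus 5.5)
Your proposal is correct and follows essentially the paper's route: an interval-arithmetic check of Theorem~\ref{thm:NK} for $G$ on affine-centre boxes in $(q,\varepsilon)$, using the modified $\zeta_1$, $\zeta_k$ and $Z_2$, edge gluing checked at the endpoints via convexity, and the link at $(1,0.28)$ with $\bar y=S(\varepsilon_*)\bar x$, $\rho=\varepsilon_* r_0$, tested against the uniqueness radius of the region family. The only slip is notational: on each box the affine centre must be based at the continued Galerkin zero $\bar x(q_m,\varepsilon_m)$, not at the $\varepsilon=0$ solution $x^0$ of Lemma~\ref{lem:nondeg}.
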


\begin{lemma}[Uniqueness in original coordinates]\label{lem:uniqonset}
Let $0<\varepsilon\le1$, and let $x^*$ be the only zero of $G$ in $\bar B_{r_1}(\bar x)$, with
$\nrm{x^*-\bar x}_\nu\le r_0<r_1$. Then $a^*=S(\varepsilon)x^*$ is the only zero of $F(\cdot\,;q,r_c(q)+\varepsilon^2)$
in $\bar B_\rho(a^*)$ with $\rho=\varepsilon^2(r_1-r_0)$.
\end{lemma}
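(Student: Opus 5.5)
Let $b$ be any zero of $F(\cdot\,;q,r)$, $r=r_c(q)+\varepsilon^2$, with $\nrm{b-a^*}_\nu\le\rho=\varepsilon^2(r_1-r_0)$. The plan is to pull $b$ back through the blow-up, $y=S(\varepsilon)^{-1}b$, and show that $y$ lies in $\bar B_{r_1}(\bar x)$; the uniqueness of $x^*$ there then forces $y=x^*$, hence $b=a^*$. Since $\varepsilon>0$, $S(\varepsilon)$ is invertible, with $S(\varepsilon)^{-1}b=(b_1/\varepsilon,b_2/\varepsilon^2,b_3/\varepsilon^2,\dots)$.

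First I check that $y$ is a zero of $G$. By Lemma~\ref{lem:blowup}, $F_1(S(\varepsilon)y)=\varepsilon^3G_1(y)$ and $F_k(S(\varepsilon)y)=\varepsilon^2G_k(y)$ for $k\ge2$. These identities hold for every $y\in\lnu$, since they are polynomial identities coefficientwise and all series converge absolutely by Lemma~\ref{lem:algebra}. So $F(b)=0$ and $\varepsilon\neq0$ give $G(y)=0$. I also need $y\in\lnu$, which holds because $\nrm{S(\varepsilon)^{-1}h}_\nu\le\varepsilon^{-2}\nrm h_\nu$ for $0<\varepsilon\le1$. Indeed, the first coordinate is scaled by $\varepsilon^{-1}\le\varepsilon^{-2}$ and the others by $\varepsilon^{-2}$.

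Next comes the distance estimate, which is the key step: it explains the factor $\varepsilon^2$ in $\rho$. Using the operator bound just noted,
\[
\nrm{y-x^*}_\nu=\nrm{S(\varepsilon)^{-1}(b-a^*)}_\nu\le\varepsilon^{-2}\nrm{b-a^*}_\nu\le r_1-r_0 .
\]
By the triangle inequality, $\nrm{y-\bar x}_\nu\le\nrm{y-x^*}_\nu+\nrm{x^*-\bar x}_\nu\le(r_1-r_0)+r_0=r_1$. So $y\in\bar B_{r_1}(\bar x)$ and $G(y)=0$, and uniqueness gives $y=x^*$. Then $b=S(\varepsilon)y=a^*$.

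The only delicate point is the worst-case scaling. The first mode is scaled only by $\varepsilon^{-1}$, but since $\varepsilon\le1$ the uniform bound $\varepsilon^{-2}$ dominates it. This is also why the resulting radius is $O(\varepsilon^2)$, matching the statement $\rho\ge\OnsMinGap\,(r-r_c(q))$ in Theorem~\ref{thm:A}, where $r_1-r_0\ge\OnsMinGap$ is presumably read off from the boxes of Proposition~\ref{prop:onset}. No obstacle is expected beyond confirming that the identities of Lemma~\ref{lem:blowup} hold on all of $\lnu$ and not just for finitely supported sequences.
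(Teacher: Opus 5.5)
Your proposal is correct and follows the paper's proof essentially line for line. Like the paper, you pull a zero back by $S(\varepsilon)^{-1}$, use $\nrm{S(\varepsilon)^{-1}}\le\varepsilon^{-2}$ together with the triangle inequality to land in $\bar B_{r_1}(\bar x)$, and conclude by uniqueness of $x^*$. Your added remarks, that the blow-up identities hold on all of $\lnu$ and that the first mode's $\varepsilon^{-1}$ scaling is dominated by $\varepsilon^{-2}$, are correct details that the paper leaves implicit.
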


\begin{proof}
The map $S(\varepsilon)$ is invertible on $\lnu$, with $(S(\varepsilon)^{-1}h)_1=h_1/\varepsilon$ and
$(S(\varepsilon)^{-1}h)_k=h_k/\varepsilon^2$ for $k\ge2$, so $\nrm{S(\varepsilon)^{-1}}\le\varepsilon^{-2}$. Let
$F(a)=0$ and $\nrm{a-a^*}_\nu\le\rho$. By Lemma~\ref{lem:blowup}, $x=S(\varepsilon)^{-1}a$ satisfies $G(x)=0$, and
$\nrm{x-\bar x}_\nu\le\varepsilon^{-2}\rho+r_0=r_1$. Hence $x=x^*$ and $a=a^*$.
\end{proof}

\begin{proof}[Proof of Theorem~\ref{thm:A}]
By Theorem~\ref{thm:NK} and the edge version of Lemma~\ref{lem:glue}, Proposition~\ref{prop:region} gives a
locally unique family $a^{*,1}$ on $[0.9,1.1]\times[0.25,4]$, in the sense of Definition~\ref{def:family},
with the region boxes as cover. Proposition~\ref{prop:onset} gives a locally unique family $x^*$ of zeros of
$G$ in $(q,\varepsilon)$. For $\varepsilon>0$ the map $S(\varepsilon)h=\varepsilon c(h)$ sends it to a
continuous family $a^{*,2}=S(\varepsilon)x^*$ of zeros of $F$ on $\{0<r-r_c(q)\le\OnsEpsMax^2\}$.

By Lemma~\ref{lem:uniqonset} the family $a^{*,2}$ is locally unique, but only in balls of radius
$O(\varepsilon^2)$, which is the reason for the asymmetric formulation of Lemma~\ref{lem:glue}.
Because $\nrm{S(\varepsilon)h}_\nu\le\varepsilon\nrm h_\nu$, we have
$a^{*,2}\in\bar B_{\varepsilon r_0}(S(\varepsilon)\bar x)$, which is all the second family needs.

The overlap $\{(q,r)\in[0.9,1.1]\times\R: 0.25\le r\le r_c(q)+\OnsEpsMax^2\}$ is connected. At $(1,0.28)$,
with $\varepsilon_*=\sqrt{0.28}$, the hypothesis of Lemma~\ref{lem:glue} holds with $\bar y=S(\varepsilon_*)\bar x$
and $\rho=\varepsilon_*r_0$, with margin $\OnsLinkMargin$. Hence the two families coincide on the overlap. Lemma~\ref{lem:classical} turns zeros into classical solutions. Local uniqueness follows from
Proposition~\ref{prop:region} for $r\ge0.25$: a zero $a$ with $\nrm{a-a^*}_\nu\le r_1-r_0$ lies within $r_1$ of
the centre and therefore equals $a^*$. Near onset it follows from Lemma~\ref{lem:uniqonset} with
$\varepsilon^2=r-r_c(q)$. The coefficient $a^*_1$
is continuous on the connected set $\mathcal D$ and never zero, by the nontriviality margins of
Propositions~\ref{prop:region} and~\ref{prop:onset} ($a_1=\varepsilon x_1$ with $\abs{x_1}>0$ near onset). It is
positive at $(q,r)=(1,1)$, where the family contains the roll of Table~\ref{tab:points}
(Proposition~\ref{prop:region}), hence on all of $\mathcal D$. Finally,
\[
  \nrm{a^*}_\nu=\varepsilon\nrm{c(x^*)}_\nu\le\varepsilon\bigl(\nrm{\bar x}_\nu+r_0\bigr)\le\OnsMaxXnorm\,\varepsilon.\qedhere
\]
\end{proof}

\section{Spectral instability}\label{sec:spectral}

\begin{figure}[t]
\centering\includegraphics[width=0.85\linewidth]{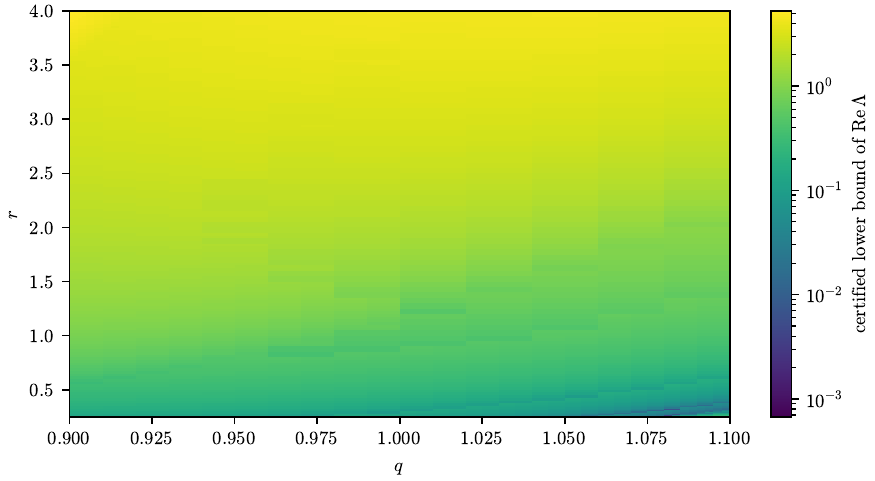}
\caption{Certified lower bound of $\Re\Lambda$ over the boxes of Proposition~\ref{prop:instab} (logarithmic
colour scale).}
\label{fig:instmap}
\end{figure}
\begin{figure}[t]
\centering\includegraphics[width=0.85\linewidth]{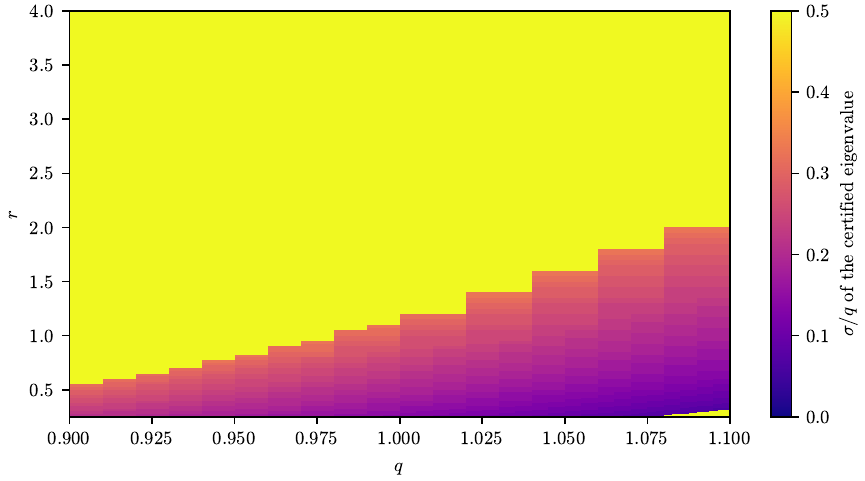}
\caption{Bloch number $\sigma/q$ of the eigenvalue certified on each box of Proposition~\ref{prop:instab}. Since
$\sigma$ is fixed on a box, the ratio is shown at the box centre $q_m$. On $\IrFracPD\%$ of the boxes
$\sigma$ is the binary64 approximation of $q_m/2$, and the selected mode corresponds numerically to the
period-doubling mode: its Floquet multiplier $e^{2\pi i\sigma/q}$ is close to, but in general not equal to,
$-1$. Elsewhere it is a finite-wavelength mode.
The selection is numerical, as the argmax over a grid of $40$ values of $\sigma$, and gives $\sigma/q_m\ge1/16$
on every box.}
\label{fig:sigmamap}
\end{figure}
\begin{figure}[t]
\centering\includegraphics[width=0.85\linewidth]{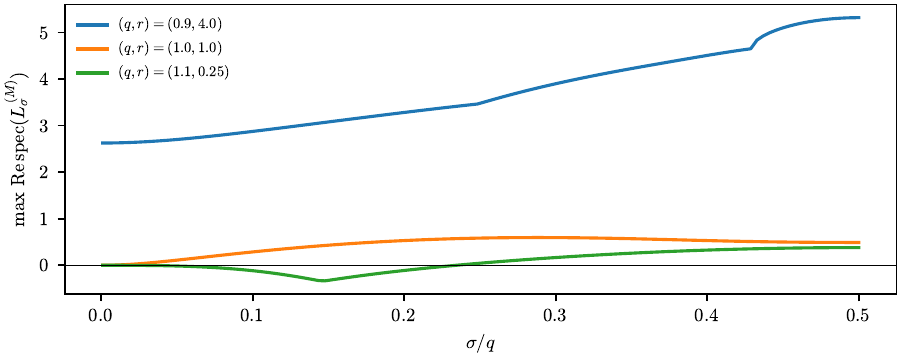}
\caption{Numerical (non-rigorous) scan of the largest real part of the truncated Bloch spectrum ($M=24$)
as a function of $\sigma/q$ at three parameter values.}
\label{fig:scan}
\end{figure}

Write $u=u_{q,r}=\sum_{m\ne0}\hat u_me^{imqx}$ with $\hat u_{\pm k}=\mp ia_k$, and let
$g(\kappa)=\kappa^2\bigl(r-(1-\kappa^2)^2\bigr)$ be the symbol of the linear part. For $\sigma\in\R$ the Bloch
ansatz $v=e^{i\sigma x}\sum_{n\in\Z}p_ne^{inqx}$ with $\kappa_n=\sigma+nq$ turns $\mathcal L v=\Lambda v$ into
\begin{equation}\label{eq:bloch}
  \bigl(g(\kappa_n)-\Lambda\bigr)p_n-i\kappa_n(\hat u*p)_n=0,\qquad n\in\Z .
\end{equation}

\begin{lemma}[Weyl sequence]\label{lem:weyl}
Let $\nu>1$, $p\in\ell^1_\nu(\Z)\setminus\{0\}$ and $\Lambda\in\C$ satisfy \eqref{eq:bloch}. Then
$\Lambda\in\operatorname{spec}_{L^2(\R)}(\mathcal L_{q,r})$.
\end{lemma}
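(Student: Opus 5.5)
First I turn the Bloch solution into a genuine eigenfunction of the periodic-coefficient operator. Set $P(x)=\sum_n p_ne^{inqx}$ and $v=e^{i\sigma x}P$. Since $p\in\ell^1_\nu(\Z)$ with $\nu>1$, we have $\abs{p_n}\le C\nu^{-\abs n}$, so $P$ is analytic in a strip. All derivatives of $v$ can then be taken termwise, and $u$ is analytic by Lemma~\ref{lem:classical}. As in that proof, the Fourier coefficients of $\mathcal L v-\Lambda v$ in the basis $e^{i\kappa_nx}$ are exactly the left-hand sides of \eqref{eq:bloch}. Hence $\mathcal Lv=\Lambda v$ holds pointwise, with $v$ bounded and smooth but not in $L^2(\R)$, and $v\ne0$ because $p\ne0$.

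Next I build a Weyl sequence by cutting off. Fix $\chi\in C_c^\infty(\R)$ with $\chi\not\equiv0$, and set $\chi_L(x)=\chi(x/L)$ and $v_L=\chi_Lv\in H^6(\R)$. Since $\mathcal L$ is a differential operator of order $6$ with smooth bounded coefficients,
\[
  (\mathcal L-\Lambda)v_L=\chi_L(\mathcal L-\Lambda)v+[\mathcal L,\chi_L]v=[\mathcal L,\chi_L]v .
\]
The commutator is a sum of terms $\chi_L^{(j)}\,c_j\,\partial_x^{\,m}v$ with $j\ge1$ and bounded coefficients $c_j$ (one of them comes from $-\partial_x(u\,\cdot)$). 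Each derivative of $\chi_L$ contributes a factor $L^{-j}$, while every $\partial_x^mv$ is bounded on $\R$. Therefore $\nrm{(\mathcal L-\Lambda)v_L}_{L^2}\le C L^{-1}\cdot L^{1/2}=CL^{-1/2}$.

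The one point needing care is the lower bound on $\nrm{v_L}_{L^2}$, which must be of order $L^{1/2}$. Since $\abs{v}=\abs P$ is $2\pi/q$-periodic and not identically zero, averaging over periods gives
\[
  \nrm{v_L}_{L^2}^2=\int\abs{\chi(x/L)}^2\abs{P(x)}^2\,dx\sim L\,\nrm\chi_{L^2}^2\cdot\frac{q}{2\pi}\int_0^{2\pi/q}\abs P^2\,dx>0
  \quad\text{as }L\to\infty .
\]
This is a Riemann-sum argument using that $\chi$ is uniformly continuous and $\abs P^2$ is periodic. Thus the normalised sequence $w_L=v_L/\nrm{v_L}$ satisfies $\nrm{(\mathcal L-\Lambda)w_L}\to0$.

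Finally, it remains to conclude that $\Lambda$ lies in the spectrum. If $\Lambda$ were in the resolvent set, then $\nrm{w_L}\le\nrm{(\mathcal L-\Lambda)^{-1}}\,\nrm{(\mathcal L-\Lambda)w_L}\to0$, which contradicts $\nrm{w_L}=1$. Closedness of $\mathcal L$ on $H^6(\R)$ holds because the top-order part $\partial_x^8$ plus lower-order bounded perturbations is elliptic, so the resolvent, when it exists, is bounded. I expect no serious obstacle; the step to write carefully is the commutator bound, where one checks that every term carries at least one derivative of $\chi_L$.
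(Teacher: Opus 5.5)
Your proposal is correct and is essentially the paper's proof: you use the same cut-off $\chi(\cdot/L)v$, observe that every commutator term carries a derivative of the cut-off and hence a factor $L^{-1}$ (giving $O(L^{-1/2})$), and get the same $L^{1/2}$ growth of $\nrm{v_L}_{L^2}$ from the periodicity of $\abs P$. The paper takes $\chi=1$ on $[-1,1]$, so its lower bound $\nrm{w_n}^2\ge cn$ is immediate without your Riemann-sum limit; also, in your last paragraph the top-order part of $\mathcal L$ is $\partial_x^6$, not $\partial_x^8$, though the contradiction with the resolvent bound does not actually need the closedness remark.
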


\begin{proof}
The function $P(x)=\sum p_ne^{inqx}$ is analytic, periodic and not identically zero, and $v=e^{i\sigma x}P$ satisfies
$(\mathcal L-\Lambda)v=0$ classically. Take $\chi\in C_c^\infty(\R)$ with $\chi=1$ on $[-1,1]$ and support in
$[-2,2]$, and set $w_n=\chi(\cdot/n)v\in H^6(\R)$. By the Leibniz rule,
$(\mathcal L-\Lambda)w_n=\sum_{j=1}^6n^{-j}\chi^{(j)}(\cdot/n)D_jv$, where the $D_j$ are differential operators
with bounded periodic coefficients. Hence $\nrm{(\mathcal L-\Lambda)w_n}_{L^2}\le Cn^{-1}(4n)^{1/2}$. On the
other hand, $\abs v=\abs P$ is periodic with positive mean square, so $\nrm{w_n}_{L^2}^2\ge c\,n$ for some
$c>0$. Therefore $\nrm{(\mathcal L-\Lambda)w_n}/\nrm{w_n}\to0$, and $\mathcal L-\Lambda$ has no bounded inverse.
\end{proof}

\paragraph{Enclosure of an eigenpair.} We solve
\[
  \mathcal F(\Lambda,p)=\Bigl(p_{n_*}-1,\ \bigl((g(\kappa_n)-\Lambda)p_n-i\kappa_n(\hat u*p)_n\bigr)_{n\in\Z}\Bigr)=0
\]
in $\C\times\ell^1_\nu(\Z)$, with norm $\abs\Lambda+\sum_n\abs{p_n}\nu^{\abs n}$. The approximate inverse is
\[
  A=\operatorname{diag}\Bigl(A_{\rm fin},\bigl((g(\kappa_n)-\bar\Lambda)^{-1}\bigr)_{\abs n>M}\Bigr),\qquad M=24.
\]
Two-sided $\ell^1_\nu(\Z)$ is a Banach algebra, and $\mathcal F$ is quadratic with
\[
  D^2\mathcal F\bigl[(d\Lambda,h),(d\Lambda',h')\bigr]=-(d\Lambda\,h'+d\Lambda'\,h).
\]

\begin{lemma}[Bloch tail]\label{lem:blochtail}
Let $D(\kappa)=\kappa^2((\kappa^2-1)^2-r)-\abs{\bar\Lambda}$ and $\kappa_{\min}=(M+1)q-\abs\sigma$, and assume
$\kappa_{\min}\ge1$ and $D(\kappa_{\min})>0$. Then for $\abs n>M$,
\[
  \abs{g(\kappa_n)-\bar\Lambda}^{-1}\le D(\kappa_{\min})^{-1},\qquad
  \abs{\kappa_n}\,\abs{g(\kappa_n)-\bar\Lambda}^{-1}\le\kappa_{\min}/D(\kappa_{\min}).
\]
\end{lemma}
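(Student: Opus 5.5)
The plan is to bound $\abs{g(\kappa_n)-\bar\Lambda}$ from below by a quantity that is real and monotone in $\abs{\kappa_n}$, and then to use the constraint $\abs n>M$ to push $\abs{\kappa_n}$ past $\kappa_{\min}$.

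\emph{Step 1: lower bound for the denominator.} By the reverse triangle inequality,
\[
  \abs{g(\kappa)-\bar\Lambda}\ge\abs{g(\kappa)}-\abs{\bar\Lambda}\ge -g(\kappa)-\abs{\bar\Lambda}
  =\kappa^2\bigl((\kappa^2-1)^2-r\bigr)-\abs{\bar\Lambda}=D(\kappa).
\]
Since $g$ and $D$ are even in $\kappa$, it suffices to work with $\abs{\kappa_n}$.

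\emph{Step 2: location of $\kappa_n$.} For $\abs n\ge M+1$ we have $\abs{\kappa_n}=\abs{\sigma+nq}\ge\abs n q-\abs\sigma\ge(M+1)q-\abs\sigma=\kappa_{\min}$.

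\emph{Step 3: monotonicity of $D$.} Set $s=\kappa^2-1$. Then
\[
  \frac{d}{d\kappa}\,\kappa^2\bigl(s^2-r\bigr)=2\kappa\bigl(s^2-r\bigr)+4\kappa^3 s .
\]
Both terms are nonnegative once $s\ge\sqrt r$, that is, once $(\kappa^2-1)^2\ge r$ with $\kappa\ge1$. The hypotheses $\kappa_{\min}\ge1$ and $D(\kappa_{\min})>0$ give $\kappa_{\min}^2\bigl((\kappa_{\min}^2-1)^2-r\bigr)>\abs{\bar\Lambda}\ge0$, hence $(\kappa_{\min}^2-1)^2>r$. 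Together with $\kappa_{\min}\ge1$ this yields $s\ge\sqrt r$ at $\kappa_{\min}$. The condition $s\ge\sqrt r$ persists for all larger $\kappa$, so $D$ is increasing on $[\kappa_{\min},\infty)$. Therefore $D(\abs{\kappa_n})\ge D(\kappa_{\min})>0$, and the first inequality of the lemma follows.

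\emph{Step 4: the weighted bound.} For the second inequality, the same argument shows that
\[
  \frac{D(\kappa)}{\kappa}=\kappa\bigl(s^2-r\bigr)-\frac{\abs{\bar\Lambda}}{\kappa}
\]
is increasing on $[\kappa_{\min},\infty)$. The first part is increasing exactly as in Lemma~\ref{lem:tail}, whose derivative $s^2-r+4s(s+1)$ is positive for $s>\sqrt r$. The second part, $-\abs{\bar\Lambda}/\kappa$, is increasing as well. Hence $\abs{\kappa_n}/D(\abs{\kappa_n})\le\kappa_{\min}/D(\kappa_{\min})$.

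The only delicate point is Step 3: one must check that $D(\kappa_{\min})>0$ really forces $\kappa_{\min}$ onto the monotone branch, i.e.\ past $\sqrt{1+\sqrt r}$ rather than below the unstable band. This is exactly where the hypothesis $\kappa_{\min}\ge1$ is used: it excludes the small-$\kappa$ branch on which $(\kappa^2-1)^2>r$ can also hold.
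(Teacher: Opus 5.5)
Your proof is correct and follows the paper's argument. The steps are the same: the reverse triangle inequality gives $\abs{g(\kappa_n)-\bar\Lambda}\ge D(\abs{\kappa_n})$, then $\abs{\kappa_n}\ge\kappa_{\min}$, then $D$ is increasing once $\kappa_{\min}\ge1$ and $D(\kappa_{\min})>0$ force $\kappa_{\min}^2>1+\sqrt r$, and finally $\kappa/D(\kappa)$ is monotone. Your check that $D(\kappa)/\kappa$ is increasing is the same computation as the paper's $D'\kappa-D>0$, since $(D/\kappa)'=(\kappa D'-D)/\kappa^2$.
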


\begin{proof}
For $\abs n>M$ we have $\abs{\kappa_n}\ge\kappa_{\min}$ and
$\abs{g(\kappa_n)-\bar\Lambda}\ge\abs{g(\kappa_n)}-\abs{\bar\Lambda}\ge D(\abs{\kappa_n})$. The function $D$ is
increasing on $[\kappa_{\min},\infty)$. Since $D(\kappa_{\min})>0$ and
$D'\kappa-D=\kappa^2((\kappa^2-1)^2-r)+4\kappa^4(\kappa^2-1)+\abs{\bar\Lambda}>0$ there (note
$D(\kappa_{\min})>0$ forces $\kappa^2>1+\sqrt r$), the positive function $\kappa/D(\kappa)$ is decreasing.
\end{proof}

Order the unknowns as $(\Lambda,(p_n)_{\abs n\le M})$, give them weights $1$ and $\nu^{\abs n}$, and use the
induced matrix norm $\nrm M=\max_jw_j^{-1}\sum_i\abs{M_{ij}}w_i$. Let $D_\kappa=\operatorname{diag}(0,(\kappa_n)_n)$ and $K=\abs{D_\kappa}=\operatorname{diag}(0,(\abs{\kappa_n})_n)$;
the first entry is $0$ because the normalisation row carries no factor $\kappa$. Since $AD_\kappa$ and $AK$
differ only by the signs of columns, they have the same induced norm.

\begin{proposition}\label{prop:eigbounds}
Let $\bar a$ have support in $1\le k\le N$ and $\bar p$ have support in $\abs n\le M$. Let $J_{\rm fin}$ be the
Jacobian of the first $2M+2$ components of $\mathcal F$ at $(\bar\Lambda,\bar p)$, computed with $\hat{\bar u}$.
Assume that the true roll satisfies $\nrm{a^*-\bar a}_\nu\le r_0^{\rm roll}$, and assume the hypotheses of
Lemma~\ref{lem:blochtail}. Then the bounds required by Theorem~\ref{thm:NK} hold
with
\begin{align*}
  Y_0&=\nrm{A_{\rm fin}\mathcal F_{\rm fin}}+\sum_{M<\abs n\le M+N}\frac{\abs{\mathcal F_n}}{D(\kappa_{\min})}\nu^{\abs n}
     +\nrm{AK}\,2r_0^{\rm roll}\nrm{\bar p},\qquad Z_0=\nrm{I-A_{\rm fin}J_{\rm fin}},\\
  Z_1&=\bigl\|\,\abs{A_{\rm fin}}\zeta\,\bigr\|+\frac{\kappa_{\min}}{D(\kappa_{\min})}\,2\nrm{\bar a}_\nu+\nrm{AK}\,2r_0^{\rm roll},\\
  &\qquad\zeta=\bigl(0,(\zeta_n)_{\abs n\le M}\bigr),\quad\zeta_n=\abs{\kappa_n}\max_{\abs j>M,\ \abs{n-j}\le N}\abs{\hat{\bar u}_{n-j}}\,\nu^{-\abs j},\\
  Z_2&=2\max\bigl(\nrm{A_{\rm fin}}_{p},1/D(\kappa_{\min})\bigr),\qquad
  \nrm{AK}\le\max\bigl(\nrm{A_{\rm fin}K_{\rm fin}},\kappa_{\min}/D(\kappa_{\min})\bigr),
\end{align*}
where $\mathcal F_n$ is evaluated with $\hat{\bar u}$, $\nrm{\cdot}_p$ restricts to the $p$-columns, and
$\zeta_n=0$ when the index set of the maximum is empty (rows with no coupling to the tail). The
conclusion of Theorem~\ref{thm:NK} follows whenever $P(r_0)<0$ and $P(r_1)<0$. As for
$K$, the first component of $\zeta$ corresponds to the normalisation row and should not be confused with the
Fourier mode $n=0$.
\end{proposition}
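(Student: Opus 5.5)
We follow the pattern of the proof of Proposition~\ref{prop:pointbounds}, adapted to the map $\mathcal F$ on $X=\C\times\ell^1_\nu(\Z)$. The map is $T(x)=x-A\mathcal F(x)$, with centre $\bar x=(\bar\Lambda,\bar p)$. The key new feature is that $\mathcal F$ is built with the true roll $\hat u^*$, whereas $J_{\rm fin}$ and the computable residual use $\hat{\bar u}$. We therefore write $\hat u^*=\hat{\bar u}+\delta$. The odd extension gives $\sum_{m}\abs{\delta_m}\nu^{\abs m}=2\nrm{a^*-\bar a}_\nu\le2r_0^{\rm roll}$, and every occurrence of $\delta$ will be charged to an extra term.

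\emph{$Y_0$.} We split $\mathcal F(\bar x)=\mathcal F^{\bar u}(\bar x)-(0,(i\kappa_n(\delta*\bar p)_n)_n)$. The first part is supported in $\abs n\le M+N$, since $\bar u$ has support in $\abs m\le N$ and $\bar p$ in $\abs n\le M$. On the finite block it gives $\nrm{A_{\rm fin}\mathcal F_{\rm fin}}$. On the tail rows we use the first inequality of Lemma~\ref{lem:blochtail}. The second part equals $AD_\kappa(\delta*\bar p)$ up to a sign. Its norm is at most $\nrm{AD_\kappa}\nrm{\delta*\bar p}\le\nrm{AK}\,2r_0^{\rm roll}\nrm{\bar p}$, by the Banach algebra property and the equality of the norms of $AD_\kappa$ and $AK$.

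\emph{$Z=Z_0+Z_1$.} We write $I-A\,D\mathcal F(\bar x)=(I-AA^\dagger)+A(A^\dagger-D\mathcal F^{\bar u}(\bar x))+A(D\mathcal F^{\bar u}-D\mathcal F)(\bar x)$, where $A^\dagger=\operatorname{diag}(J_{\rm fin},(g(\kappa_n)-\bar\Lambda)_{\abs n>M})$. The first term vanishes on the tail and gives $Z_0$.

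For the second term, consider first the finite rows $\abs n\le M$. The only contribution is $-i\kappa_n(\hat{\bar u}*h_{\rm tail})_n$, and the $\Lambda$-column does not contribute. Since $\abs{h_j}\le\nrm h\nu^{-\abs j}$ with $\abs j>M$ and $\abs{n-j}\le N$, this entry is bounded by $\zeta_n\nrm h$. Applying $\abs{A_{\rm fin}}$ and taking the weighted norm gives the first term of $Z_1$. Next consider the tail rows $\abs n>M$. Here the diagonal entry $g(\kappa_n)-\bar\Lambda$ cancels exactly, but the terms $-h_\Lambda\bar p_n$ vanish because $\bar p_n=0$ there. What is left is $-(g-\bar\Lambda)^{-1}i\kappa_n(\hat{\bar u}*h)_n$. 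The second bound of Lemma~\ref{lem:blochtail} and submultiplicativity bound it by $\frac{\kappa_{\min}}{D(\kappa_{\min})}2\nrm{\bar a}_\nu\nrm h$.

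The third term is $AD_\kappa(\delta*h)$, which gives $\nrm{AK}2r_0^{\rm roll}$. The step I expect to need the most care is exactly this bookkeeping of the finite/tail coupling. One has to verify that the $\Lambda$-component, through $-d\Lambda\,\bar p$, only enters the finite block and is already contained in $J_{\rm fin}$. One also has to check that no tail index escapes the support conditions used in $\zeta_n$.

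\emph{$Z_2$ and $\nrm{AK}$.} From the formula for $D^2\mathcal F$, we have $D\mathcal F(\bar x+k)-D\mathcal F(\bar x)$ applied to $(d\Lambda,h)$ equal to $-(k_\Lambda h+d\Lambda\,k_p)$. This vector lies only in the $p$-components, and its norm is at most $2\nrm k\nrm{(d\Lambda,h)}$. Applying $A$ restricted to $p$-columns, whose norm is $\max(\nrm{A_{\rm fin}}_p,1/D(\kappa_{\min}))$ by Lemma~\ref{lem:blochtail}, gives $Z_2$ with $r_*=\infty$. The bound on $\nrm{AK}$ follows because $A$ and $K$ are block diagonal, so the induced norm is the maximum over the finite block and the tail columns, and the latter are bounded by Lemma~\ref{lem:blochtail}. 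With these bounds, Theorem~\ref{thm:NK} (with $\Pi$ a point) applies whenever $P(r_0),P(r_1)<0$.
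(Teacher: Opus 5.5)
Your proposal is correct and follows the paper's proof essentially step by step. It uses the same splitting $\hat u^*=\hat{\bar u}+\delta$, with the $\delta$-terms charged through $\nrm{AD_\kappa}=\nrm{AK}$ and the Banach algebra property, the same decomposition $(I-AA^\dagger)+A(A^\dagger-D\mathcal F)$ with its finite-row and tail-row estimates, and obtains $Z_2$ from the quadratic structure of $\mathcal F$.

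One small correction concerns the bound by $\zeta_n\nrm h$, which has a maximum over $j$. It comes from writing $\sum_j\abs{\hat{\bar u}_{n-j}}\abs{h_j}=\sum_j\bigl(\abs{\hat{\bar u}_{n-j}}\nu^{-\abs j}\bigr)\bigl(\abs{h_j}\nu^{\abs j}\bigr)$ and pulling out the maximum of the first factor. The pointwise estimate $\abs{h_j}\le\nrm h\nu^{-\abs j}$ that you cite would only give the larger sum $\nrm h\sum_j\abs{\hat{\bar u}_{n-j}}\nu^{-\abs j}$.
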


\begin{proof}
The convolution $\hat{\bar u}*\bar p$ is supported in $\abs n\le M+N$, which gives the first two terms of $Y_0$, by
Lemma~\ref{lem:blochtail}. The true roll differs by $\delta\hat u$ with two-sided norm at most
$2r_0^{\rm roll}$. The resulting term $-iD_\kappa(\delta\hat u*\cdot)$ of $\mathcal F$ and of $D\mathcal F$ is
bounded through $\nrm{AD_\kappa}=\nrm{AK}$ and the Banach algebra property of two-sided $\ell^1_\nu(\Z)$. This gives the $r_0^{\rm roll}$
terms in $Y_0$ and $Z_1$.

For the remaining part of $Z_1$, write
\[
  I-A\,D\mathcal F=(I-AA^\dagger)+A(A^\dagger-D\mathcal F),\qquad
  A^\dagger=\operatorname{diag}\bigl(J_{\rm fin},(g(\kappa_n)-\bar\Lambda)_{\abs n>M}\bigr).
\]
\begin{itemize}[leftmargin=*]
\item Rows $\abs n\le M$: $(A^\dagger-D\mathcal F)h$ equals
$i\kappa_n\sum_{\abs j>M}\hat{\bar u}_{n-j}h_j$, whose modulus is at most
\[\abs{\kappa_n}\max_j\abs{\hat{\bar u}_{n-j}}\nu^{-\abs j}\cdot\sum_j\abs{h_j}\nu^{\abs j}\le\zeta_n\nrm h.\]
\item Rows $\abs n>M$: the entry is $(g(\kappa_n)-\bar\Lambda)^{-1}i\kappa_n(\hat{\bar u}*h)_n$, bounded by
Lemma~\ref{lem:blochtail} and $\nrm{\hat{\bar u}}=2\nrm{\bar a}_\nu$.
\end{itemize}
The bound $Z_2$ follows from the formula for $D^2\mathcal F$, whose components vanish in the normalisation row. The
factor $2$ is conservative: $\nrm{d\Lambda\,h'+d\Lambda'\,h}\le(\abs{d\Lambda}+\nrm h)(\abs{d\Lambda'}+\nrm{h'})$
would allow factor $1$. The computations use the factor $2$.
\end{proof}

\paragraph{Structure at $\sigma=0$.} On $2\pi/q$-periodic functions ($\sigma=0$),
$\mathcal L_{q,r}(\partial_xu)=0$ by translation invariance, and
$\mathcal L_{q,r}(1)=-\partial_x(u\cdot1)=-\partial_xu$ by Galilean invariance. So $0$ is an eigenvalue with the
Jordan chain $\{\partial_xu,-1\}$, and its algebraic multiplicity is at least $2$. Numerically it equals
$2$, and the check in Section~\ref{sec:impl} confirms $\mathcal L\,\partial_xu=0$ to $10^{-15}$.

Since $u$ is odd, the reflection $(Rv)(x)=v(-x)$ commutes with $\mathcal L_{q,r}$, and it maps Bloch number
$\sigma$ to $-\sigma$. Since $u$ is real, complex conjugation maps $(\sigma,\Lambda)$ to $(-\sigma,\bar\Lambda)$.
Hence the Bloch spectrum at each $\sigma$ is symmetric under conjugation. Numerically, the double zero splits for small
$\sigma$ into a pair of eigenvalues, and an exploratory scan in $\sigma$ suggests $\Re\Lambda=O(\sigma^2)$ and
$\Im\Lambda=O(\sigma)$ along it. Table~\ref{tab:points} reports, at the single value $\sigma=q/100$, the eigenvalue
selected from this pair (Remark~\ref{rem:lw}). Neither the splitting nor the scaling is proved here.

\paragraph{Parameter boxes for the spectrum.} On a box in $(q,r)$ both the roll and the eigenpair are taken
affine in $(s,t)$, and $\mathcal F$ is again a polynomial with acb coefficients. Where the roll and the
eigenvalue depend steeply on $q$, we also take the approximate inverse affine,
$A(s,t)=A_0-sA_0J_qA_0-tA_0J_rA_0$. With $E=I-A_0J_0$ this gives
\[
  I-A(s,t)J(s,t)=E-sA_0J_qE-tA_0J_rE+O(h^2),
\]
with $h$ as in Section~\ref{sec:boxes}, so the first-order terms are as small as the defect $E$ of $A_0$, whereas for a constant inverse they are
$-sA_0J_q-tA_0J_r$. Only the finite block $A_{\rm fin}$, or its affine version, is fixed on a box. The tail of $A$ is
$(g(\kappa_n(s))-\bar\Lambda(s,t))^{-1}$ at every parameter value, and the tail suprema of
Lemma~\ref{lem:blochtail} are taken with $q\ge q_m-h_q$, $r\le r_m+h_r$ and
$\abs{\bar\Lambda}\le\abs{\Lambda^0}+h_q\abs{\Lambda^q}+h_r\abs{\Lambda^r}$. On sub-boxes the roll enclosure
is recomputed and linked to the enclosure of Proposition~\ref{prop:region} by the edge version of
Lemma~\ref{lem:glue}, checked at the four corners, so it is the same roll. All $\IrRollLinks$ such links
hold with margin at least $\IrMinLink$, and the eigenvalue proof on each of these boxes uses exactly the
linked roll. The Bloch number $\sigma$ is a fixed binary64 number on each box, an approximation of $q_m i/80$ for the
$i\in\{1,\dots,40\}$ that maximises the largest real part of the truncated spectrum at the box centre. Its
exact value is immaterial, since Theorem~\ref{thm:B} asserts only the existence of some $\sigma$. Instability is certified on a box when
$\Re\Lambda^0-h_q\abs{\Re\Lambda^q}-h_r\abs{\Re\Lambda^r}-r_0>0$.

\begin{proposition}\label{prop:instab}
The rectangle $[0.9,1.1]\times[0.25,4]$ is covered by $\IrBoxes$ boxes on which an unstable Bloch eigenvalue is
certified. Of these, $\IrBoxesOne$ use a constant approximate inverse; they cover $\IrCoverOne\%$ of the area,
and the remaining $\IrUncovered$ boxes, all in $q\in[0.90,0.93]$, $r\in[3.5,4]$, are refined into
$\IrBoxesTwo$ boxes with the affine inverse. The smallest of the certified lower bounds is $\IrMinRe$, and every box with
$r_{\rm hi}>\IrThreshR$ has certified lower bound at least $0.13$. The smallest bound
occurs on a box near $q\approx\IrArgminQ$, $r=0.25$, with $\sigma\approx q_m/2$, where the computed eigenvalue at the box
centre is numerically real, $\Lambda\approx\IrArgminLam$.
\end{proposition}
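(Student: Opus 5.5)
The proposition is a finite computer-assisted verification, so the plan is to describe the computation box by box and reduce each claim to the earlier results. First, cover $[0.9,1.1]\times[0.25,4]$ by the region boxes of Proposition~\ref{prop:region}, subdivided further where needed. On each box $\Pi$ I take the roll centre $\bar a(s,t)$ affine in $(s,t)$. The enclosure radius $r_0^{\rm roll}$ comes either from the region box itself or from a recomputation on the sub-box. In the latter case the recomputed roll is identified with the family of Theorem~\ref{thm:A} by the edge version of Lemma~\ref{lem:glue}: I check $\nrm{\bar x^i_p-\bar x^j_p}+r_0^i<r_1^j$ at the four corners and use convexity along edges. For each box I compute, in floating point, the truncated Bloch matrix at the box centre for $\sigma=q_m i/80$, $i=1,\dots,40$, and fix the $\sigma$ giving the largest real part. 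I then compute the corresponding eigenpair $(\Lambda^0,p^0)$, normalised by $p_{n_*}=1$ at its largest component, and its tangents $(\Lambda^q,p^q)$, $(\Lambda^r,p^r)$ obtained by differentiating \eqref{eq:bloch}. The finite block $A_{\rm fin}$ is the numerical inverse of $J_{\rm fin}$ at the centre.

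Next, bound $Y_0,Z_0,Z_1,Z_2$ of Proposition~\ref{prop:eigbounds} uniformly on the box. Every finite quantity is a polynomial in $(s,t)$ with acb coefficients, bounded by \eqref{eq:taylor}. Lemma~\ref{lem:blochtail} gives the tail constants, evaluated with $q\ge q_m-h_q$, $r\le r_m+h_r$ and the enlarged bound on $\abs{\bar\Lambda}$, after checking $\kappa_{\min}\ge1$ and $D(\kappa_{\min})>0$. I then find $r_0\le r_1$ with $P(r_0),P(r_1)<0$. Theorem~\ref{thm:NK} yields a continuous eigenpair $(\Lambda_p,p_p)$ with $\abs{\Lambda_p-\bar\Lambda_p}\le r_0$ and $p_p\ne0$, since $p_{n_*}=1$. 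Lemma~\ref{lem:weyl} places $\Lambda_p$ in the $L^2$ spectrum. The certified lower bound on the box is $\Re\Lambda^0-h_q\abs{\Re\Lambda^q}-h_r\abs{\Re\Lambda^r}-r_0$, and I record it. Taking the minimum over all boxes gives $\IrMinRe$. Taking the minimum over boxes with $r_{\rm hi}>\IrThreshR$ must give at least $0.13$. The location of the argmin is just bookkeeping.

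The main obstacle is the corner $q\in[0.90,0.93]$, $r\in[3.5,4]$. There the eigenpair depends steeply on $q$, so with a constant $A_{\rm fin}$ the first-order terms $-sA_0J_q-tA_0J_r$ inflate $Z_0$ beyond $1$ unless the boxes are tiny. On these $\IrUncovered$ failing boxes I would first switch to the affine inverse $A(s,t)=A_0-sA_0J_qA_0-tA_0J_rA_0$. Its defect $E-sA_0J_qE-tA_0J_rE+O(h^2)$ is second order, and I would bound it by the same Taylor-model estimate \eqref{eq:taylor}. I would then bisect adaptively until $P(r_0),P(r_1)<0$ and the lower bound on $\Re\Lambda$ is positive, which here should produce $\IrBoxesTwo$ boxes. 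A second delicate point is near $r=0.25$, $q\approx1.08$, where the certified real part is small, about $0.04$. There I must keep $r_0$ well below $\Re\Lambda^0$, which may require larger $M$ or a smaller $h$ locally. The covering claim is then checked combinatorially: the union of the final boxes equals the rectangle.
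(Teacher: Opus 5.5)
Your proposal is correct and follows essentially the paper's own procedure: affine roll and eigenpair centres with the sub-box roll linked to the region family at the four corners, a fixed $\sigma$ from the grid $q_mi/80$, the bounds of Proposition~\ref{prop:eigbounds} evaluated by Taylor models with a parameter-dependent tail, the certified bound $\Re\Lambda^0-h_q\abs{\Re\Lambda^q}-h_r\abs{\Re\Lambda^r}-r_0$, and the affine inverse on the failing boxes of the corner $q\in[0.90,0.93]$, $r\in[3.5,4]$. One correction of detail: near $(q,r)\approx(\IrArgminQ,0.25)$ it is the computed centre eigenvalue that is about $\IrArgminLam$, whereas the certified lower bound there is only $\IrMinRe$, so the tangent terms and $r_0$ nearly exhaust $\Re\Lambda^0$ rather than staying well below it.
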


\begin{proof}[Proof of Theorems~\ref{thm:B} and~\ref{thm:C}]
Combine Proposition~\ref{prop:instab}, Theorem~\ref{thm:NK} and Lemma~\ref{lem:weyl} for Theorem~\ref{thm:B}.
For the bound $0.13$: the boxes with $r_{\rm hi}>\IrThreshR$ are closed, finite in number and cover
$[0.9,1.1]\times(\IrThreshR,4]$, so they cover its closure $[0.9,1.1]\times[\IrThreshR,4]$.
On a box the fixed $\sigma\le q_m/2$ may exceed $q/2$ for $q<q_m$. In that case we use that Bloch numbers are
defined modulo $q$ and that conjugation maps $(\sigma,\Lambda)$ to $(-\sigma,\bar\Lambda)$. The Bloch number
$q-\sigma\in(0,q/2)$ then carries the eigenvalue $\bar\Lambda$, which has the same real part. So $\sigma$ can
always be normalised to $(0,q/2]$ as stated.
For Theorem~\ref{thm:C} use the point proofs summarised in Table~\ref{tab:points}; their rolls belong to the
family of Theorem~\ref{thm:A} by Proposition~\ref{prop:region}.
\end{proof}

\begin{table}[t]
\centering\footnotesize\setlength{\tabcolsep}{3.5pt}
\begin{tabular}{llrrrlrr}
\toprule
$q$ & $r$ & $\bar a_1$ & roll radius & $\sigma^*$ & $\Lambda$ (most unstable) & radius & $\Re\Lambda_{\rm lw}$ at $\sigma=q/100$\\
\midrule
\csname @@input\endcsname table_points.tex
\bottomrule
\end{tabular}
\caption{Certified data at nine parameter values. Roll radius: $r_0$ of the existence proof in $\lnu$.
The Bloch number $\sigma^*$ maximises the largest real part of the truncated spectrum over a grid in
$(0,q/2]$. Centres are rounded to four decimals; the radii refer to the unrounded centres in the run data,
so each printed $\bar a_1$ and $\Lambda$ is within $10^{-4}$ of the true value.
The last column gives the centre of the enclosure of $\Re\Lambda_{\rm lw}$ to two significant digits; its
sign is certified, since the radii are below $3\cdot10^{-13}$.}
\label{tab:points}
\end{table}

\section{Implementation and verification}\label{sec:impl}

All bounds are evaluated in midpoint-radius interval arithmetic with Arb \cite{Johansson2017}, through
python-flint 0.9.0, at 212 bits.
\begin{itemize}[leftmargin=*]
\item \emph{Inputs and centres.} Parameters, the Bloch number $\sigma=q/100$ of Theorem~\ref{thm:C}, and
the points at which gluing inequalities and links between enclosures are checked are entered as decimal strings and
enclosed in balls, so every statement holds for the exact decimal values. Floating-point centres and approximate inverses are converted
exactly; their accuracy affects only the size of the bounds, never their validity.
\item \emph{Galerkin truncation.} $N=32$ for rolls. For Bloch modes, $M=24$ on the parameter boxes of
Proposition~\ref{prop:instab} and $M=48$ in the point proofs of Theorem~\ref{thm:C}. Each proof is complete at
its own $M$. The smaller value on the boxes was chosen for speed, guided by an exploratory, unrecorded
comparison with $M=48$ on two test boxes; no statement of the paper depends on that comparison.
\end{itemize}

\paragraph{Predictor and verifier.} Each computation has two stages.
\begin{itemize}[leftmargin=*]
\item \emph{Predictor} (float64, not part of the proofs): centres, tangents, approximate inverses and the
choice of $\sigma$.
\item \emph{Verifier} (Arb): takes these as exact binary numbers and checks the hypotheses of
Theorem~\ref{thm:NK} and Lemma~\ref{lem:glue}.
\end{itemize}
The predictor affects only whether a verification succeeds; the validity of a successful verification
depends on the verifier alone.
Table~\ref{tab:constants} collects the worst constants of the three coverings.

\begin{table}[t]
\centering\scriptsize\setlength{\tabcolsep}{4.5pt}
\begin{tabular}{llrrrrrrrrr}
\toprule
covering & trunc. & boxes & gluings & $\max Y_0$ & $\max Z_0$ & $\max Z_1$ & $\max(Z_0{+}Z_1)$ & $\max r_0$ & $\min r_1$ & time [s]\\
\midrule
\csname @@input\endcsname table_constants.tex
\bottomrule
\end{tabular}
\caption{Worst constants over each covering. Upper bounds are rounded up and lower bounds down. Wall-clock
times are on an 8-core laptop.}
\label{tab:constants}
\end{table}

Independent checks, which do not enter the proofs:
\begin{itemize}[leftmargin=*]
\item the identity behind \eqref{eq:F} and the rescaling of Lemma~\ref{lem:blowup}, checked exactly with SymPy;
\item agreement of the Jacobians and tangents with finite differences;
\item the residual of the equation in physical space, with the product evaluated pointwise;
\item the double zero eigenvalue at $\sigma=0$ with eigenvector $u_x$;
\item agreement of the eigenvalue between $M=48$ and $M=96$ (Newton refinement);
\item sampled residuals stay below the box bounds (at most $55\%$ of $Y_0$ and $79\%$ of $Z_0$);
\item exact reproduction of an existing proof when a box has zero width;
\item regression against a Kuramoto--Sivashinsky computation obtained with the same code.
\end{itemize}

All computations used in the proofs are recorded runs with hashed code, inputs and outputs.

\section{Discussion}\label{sec:disc}

Theorem~\ref{thm:A} establishes a continuum of finite-amplitude rolls in a band of wavenumbers, connected to
the trivial state at onset. The closed-form periodic solutions discussed in Remark~\ref{rem:exact} occur at the special value
$r=81/121$. Theorem~\ref{thm:B} gives a rigorous, quantitative form of the instability predicted in
\cite{TribelskyVelarde1996,MatthewsCox2000} on a large parameter region far from onset.
Theorem~\ref{thm:C} suggests that, away from onset, the long-wave branch need not carry the instability: at
$(q,r)=(0.9,4)$ and $(1.1,0.25)$ the eigenvalue numerically identified with this branch has negative real part
at $\sigma=q/100$, while eigenvalues at larger Bloch numbers have $\Re\Lambda\ge\InstMinRe$. Whether the branch
is stable for all small $\sigma$ is open (Remark~\ref{rem:lw}). The uniform bound $\IrMinRe$ of
Theorem~\ref{thm:B} is the smallest bound over the boxes, not an estimate of the weakest growth rate: on
the box where it occurs, the eigenvalue at the centre is about $\IrArgminLam$.
Figure~\ref{fig:sigmamap} shows two regimes of the numerically selected most unstable mode. Above a curve
rising from $r\approx0.55$ at $q=0.9$ to $r\approx2$ at $q=1.1$, the period-doubling mode dominates; below
it, a finite-wavelength mode does. The smallest bound occurs next to a small period-doubling patch near
$(1.08,0.25)$, where the selected mode changes type. The step shape of the curve
is an artefact of the box resolution, and its location is a numerical observation.

\paragraph{Limitations and open problems.}
\begin{enumerate}[leftmargin=*]
\item \emph{Instability near onset.} Spectral instability for $r_c(q)<r<0.25$ is not covered. For fixed
$q\ne1$ the roll tends to $0$ as $r\downarrow r_c(q)$, but the limiting zero state is itself unstable:
its symbol satisfies $g(1)=r_c(q)>0$, and $\kappa_1=1$ for the Bloch number $\sigma=1-q$. Instability for $r$ close to $r_c(q)$ should then follow by
perturbation, with a range of validity that shrinks as $q\to1$. Near $(q,r)=(1,0)$ the critical spectral values
approach zero, which makes uniform instability estimates more delicate; a uniform treatment, for instance a desingularised Bloch problem in the Matthews--Cox scaling, is
needed.
\item \emph{Other wavenumbers.} The band $q\in[0.9,1.1]$ was chosen for definiteness. The method extends
to other bands as long as the nondegeneracy of Lemma~\ref{lem:nondeg} and the tail conditions persist. It
fails at $q=\sqrt{2/5}$, where $\lambda_2(q,r_c(q))=12q^4(5q^2-2)$ vanishes and the second mode is also
critical.
\item \emph{Nonlinear instability} is not addressed.
\item \emph{The long-wave branch.} Theorem~\ref{thm:C} encloses an eigenvalue at a single Bloch number.
Identifying it with the branch from the double zero, and determining the sign of its real part for all small
$\sigma$, would require a validated enclosure of the spectral subspace near $0$, uniformly in $\sigma$.
\end{enumerate}

\section*{Data availability}
The code for the computer-assisted proofs, the records of all computational runs (hashes of the code,
inputs and outputs), and a script that regenerates every number, table and figure of this paper from these
records are provided as supplementary material.

\section*{Declarations}
\paragraph{Use of AI tools.} In accordance with SIAM's policy on artificial intelligence, we report that
the AI assistant Claude (Anthropic) was used in the following parts of this work: literature search and
drafting of the discussion of related work, including Remark~\ref{rem:exact}; drafting of statements and
proofs; development and testing of the interval-arithmetic code and execution of the recorded computations
(Section~\ref{sec:impl}); the scripts that generate the figures and tables; and collection of bibliographic
data, which were checked against Crossref and arXiv metadata. All figures are produced by code, and every
computation can be reproduced from the supplementary material. The authors defined the objectives of the
work, directed it, checked the mathematical content, tested the code, and read the cited works.
The authors assume responsibility for all content.

\bibliographystyle{plainnat}
\bibliography{refs}

\end{document}